\newtheorem{theorem}{Theorem}
\newtheorem{corollary}{Corollary}
\newtheorem{lemma}{Lemma}
\newtheorem{definition}{Definition}
\newtheorem{assumption}{Assumption}
\theoremstyle{remark}
\newtheorem{remark}{Remark}
\def\F{\mathcal{F}}
\def\G{\mathcal{G}}
\def\X{\mathcal{X}}
\def\bR{\mathbb{R}}
\def\bN{\mathbb{N}}
\def\bone{\mathbf{1}}
\def\bP{\mathbb{P}}
\def\bQ{\mathbb{Q}}
\def\diag{\mathrm{diag}}
\mathchardef\mhyphen="2D
\def\tloc{{t\mhyphen\mathrm{loc}}}
\def\esssup{\mathop{\mathrm{ess\,sup}}}
\def\essinf{\mathop{\mathrm{ess\,inf}}}
\def\argmax{\mathop{\mathrm{arg\,max}}}
\begin{document}
\title{Undiscounted Markov chain BSDEs to stopping times}
\author{Samuel N. Cohen\thanks{Thanks to Shige Peng, Gechun Liang, Jeff Dewynne and Chris Lustri for useful conversations, in particular to Chris Lustri for discussions on circuits violating Ohm's law. Thanks also to research support from the Oxford--Man Institute for Quanititative Finance.}\\ Mathematical Institute\\ University of Oxford \\}
\date{\today}

\maketitle

\begin{abstract}
We consider Backward Stochastic Differential Equations in a setting where noise is generated by a countable state, continuous time Markov chain, and the terminal value is prescribed at a stopping time. We show that, given sufficient integrability of the stopping time and a growth bound on the terminal value and BSDE driver, these equations admit unique solutions satisfying the same growth bound (up to multiplication by a constant). This holds without assuming that the driver is monotone in $y$, that is, our results do not require that the terminal value be discounted at some uniform rate. We show that the conditions are satisfied for hitting times of states of the chain, and hence present some novel applications of the theory of these BSDEs.
\end{abstract}

\section{Introduction}
Since their early introduction by Bismut \cite{Bismut1973} (in the linear case), and in particular since the nonlinear existence result of Pardoux and Peng \cite{Pardoux1990}, Backward Stochastic Differential Equations have risen to be a powerful component of the stochastic analyst's toolkit. The early conditions of the theory, that the driver of the BSDE be Lipschitz, that the terminal time is deterministic, that the filtration is generated by a Brownian motion, have all been extended, leaving a rich class of equations which can be used in many practical problems. In particular for our discussion here, the theory of BSDEs when noise is driven by a finite or countable state Markov chain (in continuous time) has been developed by the author and collaborators in a series of recent papers (\cite{Cohen2008}, \cite{Cohen2009}, \cite{Cohen2011a}, \cite{Cohen2012}).

In this paper, we consider a novel extension of the theory of BSDEs, when the terminal time is replaced by an unbounded stopping time. We say this is a novel extension, however previous work has approached this question in the Brownian setting, in particular Darling and Pardoux \cite{Darling1997}, Briand and Hu \cite{Briand1998} and Royer \cite{Royer2004}. In those papers, a key assumption of monotonicity of the driver is made, essentially corresponding to discounting the future at a rate which is bounded away from zero. This allows these authors to show that the problem is well-posed, in particular, that the BSDE admits unique solutions and that these solutions can be well approximated using finite-time BSDEs. A related setting is discussed in \cite[Section 2]{Cohen2012} in the Markov chain case, again with an assumption of a discounting term. 

In this paper, we approach the problem somewhat differently. We seek to show that one can impose conditions on the stopping time directly such that the BSDE admits unique solutions (at least, unique solutions satisfying some integrability conditions), and that these conditions can be verified, for example, when the stopping time is a hitting time. In this case, our conditions can be connected to the uniform ergodicity of the Markov chain under a family of measures. These questions of ergodicity were explored in \cite{Cohen2012} in the context of proving the existence of Ergodic BSDE solutions, and we draw liberally on these results. In all our analysis, we do not assume the monotonicity of the driver, which in particular allows us to consider the case when the driver depends only on the $Z$ component of the solution, that is, when the BSDE is effectively performing a nonlinear change of measure.

We then present a couple of applications, both the standard application of BSDEs to control problems, and a novel application to our understanding of non-Ohmic electronic circuits, by relating these to a Markov chain model.

The paper first presents the basic theory of BSDEs on Markov chains (Section \ref{basic}), then gives our existence proof (Section \ref{STBSDE}), then considers how the conditions of the existence proof could be verified for hitting times (Section \ref{hitting}), and ends with applications (Section \ref{applic}).

\subsection{Introducing BSDEs on Markov chains}\label{basic}

Consider a continuous-time countable-state process $X$ in a probability space $(\Omega, \F, \bP)$. We shall suppose that $X$ is a Markov chain in its own filtration, under a measure $\bP$.  Without loss of generality, we shall represent $X$ as taking values from the standard basis vectors $e_i$ of $\bR^N$ (where $N\in\bN\cup\{\infty\}$ is the number of states, and $\bR^\infty$ denotes the space of infinite real sequences). We write $\X$ for this set of basis vectors. For notational simplicity, we will think of all vectors as column vectors, and denote by $z^*$ the transpose of $z$ (so that $z^* y$ is the Euclidean or $\ell_2$ inner product, and $e_i^*z$ is the $i$th component of $z$). An element $\omega\in\Omega$ can be thought of as describing a path of the chain $X$.

Let $\{\F_t\}_{t\geq 0}$ be the completion of the filtration generated by $X$, that is,
\[\F_t = \sigma(\{X_s\}_{s\leq t}) \vee \{B\in\F_\infty:\bP(B)=0\}.\]
 As $X$ is a right-continuous pure jump process which does not jump at time $0$, this filtration is right-continuous, and we assume $\F=\F_\infty=\bigvee_{t<\infty}\F_t$.  For the basic theory of continuous-time countable-state Markov chains, see for example Rogers and Williams \cite[Vol. 1, p228ff]{Rogers2000}, for the approach taken here, see Elliott, Aggoun and Moore \cite[Part III]{Elliott1994a}).

Let $A$ denote the possibly infinite rate matrix\footnote{In our notation, as in \cite{Elliott1994a}, $A$ is the matrix with entries $A_{ij}$, where $A_{ij}$ is the rate of jumping from state $j$ to state $i$. Depending on the convention used, this is either the rate matrix or its transpose. In our notation $A^*$, the transpose of $A$, is the generator of the Markov chain.} of the chain $X$. Note that $(A_t)_{ij}\geq 0$ for $i\neq j$ and $\sum_i (A_t)_{ij} = 0$ for all $j$ (the columns of $A$ all sum to $0$). We assume, for simplicity, that the entries in $A_t$ are uniformly bounded, and so the chain is regular.

From the Doob--Meyer decomposition (see \cite[Appendix B]{Elliott1994a}), we write our chain in the following way
\begin{equation}\label{ref:DMrepMarkovChain}
X_t = X_0 + \int_{]0,t]} A_u X_{u-} du + M_t,
\end{equation}
where $M$ is a locally-finite-variation pure-jump $\bP$-martingale in $\bR^N$, and the chain starts in state $X_0\in\bR^N$. Our aim is to study a class of BSDEs up to stopping times, that is, equations of the form
\begin{equation}\label{eq:STBSDE}
 Y_t = \xi+\int_{]t,\tau]} f(\omega, u, Y_{u-}, Z_u) du - \int_{]t,\tau]} Z_u^* dM_u, \qquad 0\leq t\leq \tau< \infty,
\end{equation}
where $\tau$ is an integrable stopping time, $f:\Omega\times\bR^+\times \bR\times \bR^N \to \bR$ is a given function, $Y$ is a real-valued c\`adl\`ag stochastic process, $Z$ is a predictable process in $\bR^N$ such that
\[\int_{]0,t]} Z_u^*dM_u := \sum_i \int_{]0,t]} (Z_u)^i d(M)^i_u\]
is a square-integrable martingale, (here $(\cdot)^i$ denotes the $i$th component of the vector), and $\xi$ is $\F_{\tau}$-measurable. The key problem is that we do not assume that $\tau$ is uniformly bounded, nor that $f$ has monotone dependence on $Y$. Such assumptions are needed to apply the `discounted' BSDE methods in \cite{Cohen2012} or \cite{Briand1998}.

\begin{remark}
 We note that the use of left limits for $Y$ in the driver term of (\ref{eq:STBSDE}) initially seems unconventional, for those used to the theory of BSDEs in a Brownian setting. However, it is the natural approach when the driver term can itself jump (see \cite{Cohen2010}), it ensures that the driver is predictable, and as the integral is with respect to Lebesgue measure and our processes have at most countably many jumps, in this case the equation is unchanged whether the left limits are included or not.
\end{remark}

Of importance will be the following process and the associated spaces.
\begin{definition}
 Let
\[\psi_t:= \diag(A_t X_{t-}) - A_t \diag(X_{t-}) - \diag(X_{t-}) A_t^*.
\]
Then $\psi$ is a predictable process taking values in the symmetric, positive semidefinite matrices in $\bR^{N\times N}$, with the property that
\[E\Big[\Big(\int_{]0,t]} Z_u^*dM_u\Big)^2\Big] = \int_{]0,t]} E[Z^*_u \psi_u Z_u] du\]
for any $t$ and any predictable processes $Z$ of correct dimension (see \cite{Cohen2008}). For simplicity, we write
\[\|z\|^2_{M_t} := z^*\psi_t z,\]
and note that this is a stochastic seminorm.

We define the following spaces of processes.
\begin{itemize}
 \item $Y\in S^2$ if $E[\sup_{t\in\bR^+} Y^2_t]<\infty$ and $Y$ is c\`adl\`ag,
 \item $Z\in H^2_M$ if $E\big[\int_{]0,\infty]} \|Z_t\|_{M_t}^2 dt\big] < \infty$ and $Z$ is predictable,
 \item $Z\sim_M Z'$ if $\|Z_t-Z'_t\|_{M_t}=0$ for almost all $t$.
 \item $Y\in S^2_{\tloc}$ if $\{Y_tI_{t\leq T}\}\in S^2$ for any $T<\infty$, and similarly $H^2_{M, \tloc}$.
\end{itemize}
We note that the definition of $S^2_{\tloc}$ is not the same as the set of processes locally in $S^2$ as usually understood in stochastic analysis, as the requirement is for any deterministic $T$, rather than for a sequence of stopping times.
\end{definition}

The basic existence theorem for BSDEs in this setting is the following.
\begin{theorem}\label{thm:basicBSDEexist}
Let $T$ be a finite deterministic time, and $f:\Omega\times[0,T]\times\bR\times\bR^N \to\bR$ be a predictable function. If $f$ is uniformly Lipschitz in $y$ and $z$, that is, there exists a constant $c>0$ such that
\[|f(\omega, t, y, z)- f(\omega,t,y', z')|^2 \leq c(|y-y'|^2 + \|z-z'\|^2_{M_t}),\]
and
\[E\Big[ \int_{]0,T]} |f(\omega, t, 0, 0)|^2 dt \Big] < \infty\]
then for any $\xi\in L^2(\F_T)$, there exists a unique solution $(Y,Z) \in S^2 \times H^2_M$ to the BSDE
\[\xi=Y_t- \int_{]t,T]} f(\omega, u, Y_u, Z_u) du + \int_{]t,T]} Z_u^* dM_u.\]
\end{theorem}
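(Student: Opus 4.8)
The plan is to run the classical Pardoux--Peng Picard fixed-point scheme, adapted to the jump/chain setting, in a suitably weighted version of the Banach space $S^2\times H^2_M$. The single indispensable tool is a martingale representation theorem for the filtration $\{\F_t\}$: every square-integrable $\bP$-martingale $N$ can be written $N_t=N_0+\int_{]0,t]}Z_u^* dM_u$ for some predictable $Z\in H^2_M$, with the representation unique up to $\sim_M$. This is the chain-analogue of the Brownian representation theorem and is available from the author's earlier work (\cite{Cohen2008}); it is the step that makes the whole argument go through, so I would invoke it at the outset.

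First I would define the solution map $\Phi$. Given a candidate pair $(y,z)\in S^2\times H^2_M$, form the frozen driver $g_u:=f(\omega,u,y_{u-},z_u)$; the Lipschitz hypothesis together with $E[\int_{]0,T]}|f(\omega,u,0,0)|^2 du]<\infty$ guarantees $E[\int_{]0,T]}g_u^2\,du]<\infty$, so that $\xi+\int_{]0,T]}g_u\,du\in L^2(\F_T)$. Setting $N_t=E[\xi+\int_{]0,T]}g_u\,du\mid\F_t]$, martingale representation yields $N_t=N_0+\int_{]0,t]}Z_u^* dM_u$ with $Z\in H^2_M$. Defining $Y_t=E[\xi+\int_{]t,T]}g_u\,du\mid\F_t]$ and rearranging the two identities shows that $(Y,Z)=:\Phi(y,z)$ solves the BSDE with driver $g$. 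An a priori estimate---using the isometry $E[(\int Z_u^* dM_u)^2]=\int E[Z_u^*\psi_u Z_u]\,du$ for the $H^2_M$ bound and Doob's maximal inequality for the $\sup$-bound defining $S^2$---confirms that $\Phi$ maps $S^2\times H^2_M$ into itself.

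The heart of the argument is the contraction estimate. Taking inputs $(y,z),(y',z')$ with outputs $(Y,Z),(Y',Z')$, I would apply the It\^o/quadratic-variation formula to $e^{\beta u}|Y_u-Y'_u|^2$ on $[0,T]$. The bracket term produces exactly $\int e^{\beta u}\|Z_u-Z'_u\|_{M_u}^2\,du$ by the stated isometry, the weight produces a favourable $-\beta\int e^{\beta u}|Y_u-Y'_u|^2\,du$ term, and the driver difference $g_u-g'_u$ is controlled, via the Lipschitz constant $c$, by $|y_{u-}-y'_{u-}|^2+\|z_u-z'_u\|_{M_u}^2$. A completing-the-square / Young's inequality step then shows that for $\beta$ taken large enough relative to $c$, the map $\Phi$ is a strict contraction in the weighted norm $E[\int_{]0,T]}e^{\beta u}(|\cdot|^2+\|\cdot\|_{M_u}^2)\,du]$. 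Banach's fixed-point theorem furnishes a unique fixed point, which is precisely the unique solution; since $T<\infty$ the weighted and unweighted norms are equivalent, transferring the conclusion back to $S^2\times H^2_M$.

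I expect the main obstacle to be this contraction estimate in the jump setting: one must check that It\^o's formula applied to $|Y-Y'|^2$ yields no uncontrolled jump contributions beyond the $\|Z-Z'\|_{M}^2$ bracket term. This is exactly where the structure of $\psi$ and the choice of seminorm $\|\cdot\|_{M_t}$ (rather than a naive Euclidean norm on $Z$) is essential---the Lipschitz hypothesis is posed in precisely the seminorm matching the martingale bracket, and it is this alignment that lets the estimate close. A secondary bookkeeping point is maintaining predictability and the $\sim_M$-equivalence for $Z$ consistently through the iteration, so that the fixed point is well defined as an element of $H^2_M$.
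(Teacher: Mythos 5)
Your proposal is correct, and it is essentially the paper's own route: the paper's proof is just a citation of \cite{Cohen2008} (finite state) together with \cite{Cohen2010} (general spaces), and those references establish the result by exactly the scheme you describe --- martingale representation for the chain filtration, a solution map built from conditional expectations, and a Banach fixed-point argument via an It\^o estimate on $e^{\beta u}|Y_u-Y'_u|^2$ in which the bracket term is controlled by the $\|\cdot\|_{M_t}$-seminorm matching the Lipschitz hypothesis. In effect you have unpacked the proof that the paper outsources to its references, including the key point (the alignment of the stochastic seminorm with the martingale bracket) that makes the contraction close in the jump setting.
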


\begin{proof}
For the finite state case, this result is given in \cite{Cohen2008}. For the infinite state case, we use the martingale representation result established in \cite{Cohen2008},  which naturally extends to general spaces, coupled with the existence result for BSDEs in general spaces established in \cite{Cohen2010}.
\end{proof}

We recall a key definition from \cite{Cohen2012}.
\begin{definition}\label{control}
 Consider $A$ and $B$ (possibly infinite) rate matrices, that is, matrices with $A_{ij}\geq 0$ for $i\neq j$ and $\sum_i A_{ij} = 0$ for all $j$, and similarly for $B$. We write $E^A$ for the expectation under the measure where $X$ is a Markov chain with rate matrix $A$.

For $\gamma>0$, we shall say that $B$ is $\gamma$-\emph{controlled} by $A$ whenever $B-\gamma A$ is also a rate matrix, and the diagonal entries of $B-\gamma A$ are at most $-\gamma$. If $B$ is $\gamma$-controlled by $A$, then we shall write $A\preceq_\gamma B$. 

If $A\preceq_\gamma B$ and $B\preceq_\gamma A$, we shall write $A\sim_\gamma B$.
\end{definition}

A key result in the analysis of BSDEs is the comparison theorem. In the case of BSDEs with Markov chain noise, and in general for BSDEs with jumps, a further condition is required to ensure that the result holds. In \cite{Cohen2009, Cohen2011a}  a general condition under which the comparison theorem holds is presented, and in \cite{Cohen2008b} a condition specific to finite state Markov chain BSDEs was also given. We here give another variant, which in some sense underlies the others.

\begin{definition}\label{defn:balanced}
 For a driver $f$, we say that $f$ is \emph{$\gamma$-balanced} if there exists a random field $\lambda:\Omega\times\bR^+\times\bR^N\times\bR^N\to\bR^N$, with
$\lambda(\cdot,\cdot, z,z')$  predictable and $\lambda(\omega, t, \cdot, \cdot)$ Borel measurable, such that
\begin{itemize}
 \item $f(\omega, t, y, z)-f(\omega, t, y,z')= (z-z')^*(\lambda(\omega, t, z,z') -AX_{t-}),$
 \item for each $e_i\in\X$,
\[\frac{e_i^*\lambda(\omega, t, z, z')}{e_i^* A X_{t-}} \in [\gamma, \gamma^{-1}]\]
for some $\gamma>0$, where $0/0:=1$,
 \item $\bone^*\lambda(\omega, t, z,z')\equiv0$, for $\bone\in\bR^N$ the vector with all entries $1$ and
 \item  $\lambda(\omega, t, z+\alpha\bone, z') = \lambda(\omega, t, z, z')$ for all $\alpha\in\bR$.
\end{itemize}
For simplicity, we write $\lambda^{z,z'}_t$ for $\lambda(\omega, t, z, z')$.
 
\end{definition}
\begin{remark}\label{rem:measchangebalanced}
 A minor variation on the proof of \cite[Lemma 12]{Cohen2012} shows that for $f$ to be $\gamma$-balanced it is sufficient, but not necessary, that for all $y, z, z'$,
\[\frac{f(\omega, t, y, z)- f(\omega,t,y,z')}{\|z-z'\|^2_{M_t}}(z-z')^* \Delta M_t >-1+\gamma\]
up to indistinguishability. A significant case where this condition may not hold is when $f(\omega, t, y, z) = z^*(B-A)X_{t-}$, for some $B\sim_\gamma A$. Here we see directly that $f$ is $\gamma$-balanced with $\lambda^{z,z'}_t = BX_{t-}$.
\end{remark}

\begin{lemma}\label{lem:balislip}
 If $f$ is $\gamma$-balanced, then it is Lipschitz with respect to $z$ under the $\|\cdot\|_{M_t}$-seminorm.
\end{lemma}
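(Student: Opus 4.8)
The plan is to fix $\omega$ and $t$ and argue pointwise, using the fact that $X_{t-}$ is almost surely one of the basis vectors, say $X_{t-}=e_j$. The first step is to make the seminorm explicit on this event. Writing $a_i:=e_i^*AX_{t-}$ (so that $a_i\geq 0$ for $i\neq j$ and $\sum_i a_i=0$, with $a_j=-\sum_{i\neq j}a_i\leq 0$), a direct computation from the definition of $\psi_t$, using $\diag(X_{t-})=e_je_j^*$ and the column-sum-zero property, gives for any $w\in\bR^N$
\[
\|w\|^2_{M_t} = w^*\psi_t w = \sum_{i\neq j} a_i\,(w_i - w_j)^2 ,
\]
a nonnegative quantity. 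I would record this identity first, since it is exactly the form that makes the seminorm amenable to a Cauchy--Schwarz estimate.

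Next, setting $w:=z-z'$ and $\mu:=\lambda^{z,z'}_t$, the defining property of a $\gamma$-balanced driver reads $f(\omega,t,y,z)-f(\omega,t,y,z')=w^*(\mu - AX_{t-})$. Both $\mu$ and $AX_{t-}$ are orthogonal to $\bone$ — the former by the third bullet of Definition \ref{defn:balanced}, the latter because the columns of $A$ sum to zero — so $\sum_i(\mu_i-a_i)=0$, and I may subtract $w_j\sum_i(\mu_i-a_i)=0$ to recenter:
\[
f(\omega,t,y,z)-f(\omega,t,y,z') = \sum_{i\neq j}(w_i - w_j)(\mu_i - a_i).
\]
This recentering is the conceptual crux: it reflects the translation invariance $w\mapsto w+\alpha\bone$ shared by both the driver difference and the seminorm, and brings the expression into precisely the weighted-difference form controlled above. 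The ratio condition $\mu_i/a_i\in[\gamma,\gamma^{-1}]$ (with $\gamma\in(0,1]$) then yields, for $i\neq j$ where $a_i\geq 0$, the termwise bound $|\mu_i-a_i|\leq(\gamma^{-1}-1)a_i$; the convention $0/0:=1$ forces $\mu_i=0$ whenever $a_i=0$, so this bound remains valid (both sides vanishing) on the degenerate coordinates.

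I would finish by applying Cauchy--Schwarz against the weights $a_i$:
\[
\Big|\sum_{i\neq j}(w_i-w_j)(\mu_i-a_i)\Big| \leq (\gamma^{-1}-1)\sum_{i\neq j}|w_i-w_j|\,a_i \leq (\gamma^{-1}-1)\Big(\sum_{i\neq j}a_i(w_i-w_j)^2\Big)^{1/2}\Big(\sum_{i\neq j}a_i\Big)^{1/2}.
\]
The first factor is $\|w\|_{M_t}$ and the second is $\sqrt{-a_j}=\sqrt{-(A_t)_{jj}}$. The only real obstacle is securing a Lipschitz constant that is uniform in $\omega$, $t$ and the current state $j$; this is where I would invoke the standing assumption that the rates of the chain are uniformly bounded, so that $K:=\sup_{j,t}\big(-(A_t)_{jj}\big)<\infty$. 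This delivers
\[
|f(\omega,t,y,z)-f(\omega,t,y,z')| \leq (\gamma^{-1}-1)\sqrt{K}\,\|z-z'\|_{M_t}
\]
with a deterministic constant, which is the claimed Lipschitz property. The degeneracy of the seminorm causes no difficulty, as the estimate itself shows the left-hand side vanishes whenever $\|z-z'\|_{M_t}=0$.
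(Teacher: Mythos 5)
Your proof is correct, and it rests on the same two pillars as the paper's own argument: the coordinate expansion $\|w\|^2_{M_t}=\sum_{i\neq j}a_i(w_i-w_j)^2$ of the seminorm, and a Cauchy--Schwarz estimate against the jump-rate weights $a_i$, closed by the standing assumption that the rates (hence $\sum_{i\neq j}a_i=-(A_t)_{jj}$) are uniformly bounded. Where you differ is in the decomposition. The paper writes $\lambda^{z,z'}_t=D\,AX_{t-}$ for a diagonal $D$ with entries in $[\gamma,\gamma^{-1}]$, uses the translation invariance $f(\omega,t,y,z)=f(\omega,t,y,z+\alpha\bone)$ to normalise $X_{t-}^*(z-z')=0$, pushes $D-I$ onto $z-z'$, and estimates in two steps: first $(v^*AX_{t-})^2\le|X_{t-}^*AX_{t-}|\,\|v\|^2_{M_t}$ for general $v$, then a seminorm bound for $(D-I)(z-z')$. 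You instead keep the perturbation on the $\lambda$ side: you recentre the sum using $\bone^*\lambda=0$ together with the zero column sums of $A$ (rather than translation invariance), bound $|\mu_i-a_i|\le(\gamma^{-1}-1)a_i$ termwise --- correctly observing that the $0/0:=1$ convention forces $\mu_i=0$ whenever $a_i=0$ --- and finish with a single Cauchy--Schwarz. The two routes exploit the $\bone$-invariance of Definition \ref{defn:balanced} in different places but are otherwise parallel. Your arrangement buys a clean explicit constant $(\gamma^{-1}-1)\sqrt{K}$, and it is in fact the sharper one: the paper's second step asserts $\|(D-I)(z-z')\|^2_{M_t}\le\gamma^{-1}\|z-z'\|^2_{M_t}$, which would require $(\gamma^{-1}-1)^2\le\gamma^{-1}$ and hence fails for small $\gamma$; the correct factor there is $(\gamma^{-1}-1)^2$, which is exactly what your termwise bound delivers. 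This discrepancy affects only the constant, not the qualitative Lipschitz conclusion, so both proofs establish the lemma.
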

\begin{proof}

For each $v\in\bR^N$, we can write,
\[\|v\|^2_{M_t} = v^*\psi_tv = \sum_{e_i\neq X_{t-}} (e_i^*v_i-X_{t-}^*v)^2 (e_i^* A X_{t-}).\]
Therefore, 
\begin{equation}\label{eq:vAXLip}
 \begin{split}
  (v^*AX_{t-})^2 &= \sum_{\{i,j:e_i, e_j\neq X_{t-}\}}(e_i^*v_i-X_{t-}^*v)(e_j^*v_i-X_{t-}^*v) (e_i^* A X_{t-})(e_j^* A X_{t-})\\
%&\leq  \sum_{e_i, e_j\neq X_{t-}}\Big[(e_i^*v_i-X_{t-}^*v)^2 (e_i^* A X_{t-})(e_j^* A X_{t-})\\&\qquad\qquad + (e_j^*v_i-X_{t-}^*v)^2  (e_i^* A X_{t-})(e_j^* A X_{t-})\Big]\\
&\leq  \sum_{\{i:e_i\neq X_{t-}\}}(e_i^*v_i-X_{t-}^*v)^2 (e_i^* A X_{t-})\sum_{\{j:e_j\neq X_{t-}\}}(e_j^* A X_{t-})\\
%&=  |X_{t-}AX_{t-}|\sum_{e_i\neq X_{t-}}(e_i^*v_i-X_{t-}^*v)^2 (e_i^* A X_{t-})\\
&= |X_{t-}AX_{t-}|\, \|v\|^2_M.
 \end{split}
\end{equation}
As we assumed $|X_{t-}AX_{t-}|$ is bounded, this shows that $v\mapsto v^*AX_{t-}$ is Lipschitz in the $\|\cdot\|_{M_t}$ seminorm. By assumption, we have
\[
 f(\omega, t, y, z)-f(\omega, t, y,z')= (z-z')^*(\lambda^{z,z'}_t -AX_{t-}),
\]
so, as  $\lambda^{z,z'}_t = D A X_{t-}$, for $D$ some diagonal matrix with diagonal entries in $[\gamma, \gamma^{-1}]$, we know
 \[f(\omega, t, y, z)-f(\omega, t, y,z')= (z-z')^*(D-I)(A X_{t-}) = ((D-I)(z-z'))^*(A X_{t-}).\]
Therefore, by (\ref{eq:vAXLip}),
\[ (f(\omega, t, y, z)-f(\omega, t, y,z'))^2 \leq   |X_{t-}AX_{t-}|\,\|(D-I)(z-z')\|^2_{M_t}.\]
 As $f$ is $\gamma$-balanced we know $f(\omega, t, y, z)=f(\omega, t, y, z+\alpha\bone)$ for any $\alpha$, so without loss of generality we can write $z-z'$ in a form such that $X_{t-}^*(z-z')\equiv 0$. For each $e_i$ we have $(D-I)e_i = \kappa_i e_i$ for some $\kappa_i \in [\gamma-1, \gamma^{-1}-1]$. As $\gamma<1$, for these $z-z' =: \sum_{\{i:e_i\neq X_{t-}\}} v_i e_i$,
\begin{align*}\|(D-I)(z-z')\|^2_{M_t} &= \Big\|\sum_{\{i:e_i\neq X_{t-}\}}\kappa_i v_i e_i\Big\|^2_{M_t} = \sum_{\{i:e_i\neq X_{t-}\}} (\kappa_i v_i)^2 (e_i^*AX_{t-}) \\
& \leq \gamma^{-1}\sum_{\{i:e_i\neq X_{t-}\}} v_i^2 (e_i^*AX_{t-}) = \gamma^{-1}\|z-z'\|^2_{M_t},
\end{align*}
which yields the result.

\end{proof}

In the following lemma, we use the notion of the essential supremum of a family of random variables (as opposed to the essential supremum of a single random variable). This is similar conceptually to the supremum taken pointwise for each $\omega$, with more care taken to ensure measurability and to prevent sets of measure zero contaminating the result. A construction and discussion of this concept can be found in \cite[Appendix A5]{Follmer2002}.
\begin{lemma}\label{lem:gambalancedconvex}
Let $\{f(u; \omega, t, y, z)\}_{u\in U}$ be a family of $\gamma$-balanced drivers. Then 
\[g(\omega, t, y, z) := \esssup_{u\in U}\{f(u; \omega, t, y, z)\}\]
 is also $\gamma$-balanced, and similarly for $\essinf_{u\in U} f$.
\end{lemma}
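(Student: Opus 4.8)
The plan is to recast $\gamma$-balancedness in the geometric form made explicit in the proof of Lemma~\ref{lem:balislip}: a driver $f$ is $\gamma$-balanced exactly when, for every $z,z'$, the increment $f(\omega,t,y,z)-f(\omega,t,y,z')$ equals $(z-z')^*(\lambda^{z,z'}_t-AX_{t-})$ for some $\lambda^{z,z'}_t$ lying in the set
\[
\Lambda_t := \big\{DAX_{t-} : D=\diag(d_i),\ d_i\in[\gamma,\gamma^{-1}],\ \bone^*DAX_{t-}=0\big\},
\]
and moreover this $\lambda^{z,z'}_t$ may be chosen to depend on $z$ only modulo $\bone$. The set $\Lambda_t$ encodes the ratio bound and the $\bone^*\lambda\equiv 0$ requirement of Definition~\ref{defn:balanced}, it depends on $(\omega,t)$ only through $AX_{t-}$, and it is nonempty (it contains $AX_{t-}$), convex and compact. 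First I would record that, for fixed $(\omega,t,z,z')$, the map $\lambda\mapsto(z-z')^*(\lambda-AX_{t-})$ is linear and continuous, so its range over $\Lambda_t$ is a compact interval $I_{z,z'}=[m,M]$ (with $m,M$ random, measurable in $(\omega,t,z,z')$), and every value of $I_{z,z'}$ is attained.

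The key step is an interval-containment argument. Since each $f(u;\cdot)$ is $\gamma$-balanced, each increment $f(u;\omega,t,y,z)-f(u;\omega,t,y,z')$ lies a.s.\ in the same interval $I_{z,z'}$, which does not depend on $u$. Hence, for fixed $(t,y,z,z')$ and all $u$, $f(u;z)\le f(u;z')+M$ and $f(u;z)\ge f(u;z')+m$ a.s.\ (suppressing the unchanged arguments). Monotonicity of the essential supremum, together with its commutation with addition of the $u$-independent random variables $M,m$, then gives $g(z')+m\le g(z)\le g(z')+M$, i.e.\ $g(z)-g(z')\in I_{z,z'}$. Because every value of $I_{z,z'}$ is attained on $\Lambda_t$ by convexity, there is some $\lambda^{z,z'}_t\in\Lambda_t$ with $g(z)-g(z')=(z-z')^*(\lambda^{z,z'}_t-AX_{t-})$, and membership in $\Lambda_t$ delivers the ratio and $\bone^*\lambda$ conditions automatically. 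The $\essinf$ case is word-for-word identical, using monotonicity of $\essinf$ in place of $\esssup$.

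It then remains to pin down a selection of $\lambda^{z,z'}_t$ that is measurable and shift-invariant. Here I would take the canonical element of minimal Euclidean norm in the nonempty, closed, convex solution set $\{\lambda\in\Lambda_t:(z-z')^*(\lambda-AX_{t-})=g(z)-g(z')\}$; this element is unique and depends measurably on the data $AX_{t-}$, $z-z'$ and $g(z)-g(z')$. Shift-invariance is then automatic: replacing $z$ by $z+\alpha\bone$ leaves $g(z)-g(z')$ unchanged (each $f(u;\cdot)$, and hence $g$, depends on $z$ only modulo $\bone$, a consequence of its balancedness and $\bone^*\lambda\equiv0=\bone^*AX_{t-}$) and, for the same reason, leaves the constraint functional on $\Lambda_t$ unchanged, so the solution set and its minimal-norm element are unchanged. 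The one genuinely technical point — and the step I expect to be the main obstacle — is showing that $g$ is itself jointly measurable and predictable when $U$ is uncountable, since the minimal-norm selection needs $g(z)-g(z')$ as a measurable input. I would handle this by the standard reduction of the essential supremum to a countable, upward-directed subfamily (as constructed in \cite[Appendix A5]{Follmer2002}): the finite maxima $\max_{n\le N}f(u_n)$ are $\gamma$-balanced by the finite-index version of the containment argument above, a countable supremum of predictable fields is predictable, and $g$ arises as their increasing limit, after which the representation above applies to $g$ directly.
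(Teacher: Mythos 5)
Your proof is correct, and it takes a genuinely different (and more self-contained) route than the paper's. The paper's proof splits into cases: when the supremum is attained it picks optimizers $u^*$ at $z$ and $u_*$ at $\bar z$, sandwiches $g(z)-g(\bar z)$ between the increments $f(u_*;z)-f(u_*;\bar z)$ and $f(u^*;z)-f(u^*;\bar z)$, each of the form $(z-\bar z)^*(\lambda-AX_{t-})$ with $\lambda$ admissible, and then solves a scalar equation for $\alpha$ so that $\lambda^{z,\bar z}_t=\alpha\lambda^*+(1-\alpha)\lambda_*$ matches the increment exactly, convexity of the constraints in Definition \ref{defn:balanced} making this combination admissible; when the supremum is not attained, it invokes an approximating sequence and $\ell_2$-compactness of the admissible $\lambda$'s, with details explicitly left to the reader. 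Your interval-containment argument handles both cases uniformly: every balanced increment lies in the compact interval $I_{z,z'}$ (the image of your set $\Lambda_t$), monotonicity together with translation-equivariance of $\esssup$ places $g(z)-g(z')$ in that same interval with no attainment hypothesis, and the intermediate-value property of the linear functional on the convex compact $\Lambda_t$ then plays exactly the role of the paper's interpolation in $\alpha$. The underlying mechanism --- convexity of the admissible set of $\lambda$'s --- is the same in both, but your version needs no case split and no approximation, and it supplies two things the paper omits: an explicit predictable, shift-invariant selection of $\lambda^{z,z'}_t$ (the minimal-norm element) and an argument for measurability of $g$ itself via reduction to a countable subfamily. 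One caveat you share with the paper: the selected $\lambda$ depends on the suppressed variable $y$ (through $g(z)-g(z')$, which can genuinely vary with $y$ even when each $f(u;\cdot)$ has $y$-free increments, e.g.\ for a maximum of two drivers affine in $z$ with different slopes and different $y$-dependence), whereas the random field in Definition \ref{defn:balanced} formally carries no $y$ argument; both proofs thus implicitly read the definition as permitting $y$-dependent $\lambda$, which is harmless wherever the lemma is used (e.g.\ in Lemma \ref{lem:girsanovvalid}, where $\lambda$ is only ever evaluated along predictable solution processes).
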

\begin{proof}
Suppose first that for almost all $\omega$, all $t,y,z$ the supremum is attained. Then, omitting $\omega, t, y$ for notational simplicity, there exists some predictable control $u^*$ such that
\[g(z) - g(\bar z) \leq f(u^*;  z) - f(u^*;  \bar z)= (z-\bar z) (\lambda^{*}-AX_{t-})\]
for some $\lambda^{*}$ satisfying the requirements of Definition \ref{defn:balanced} for $f(u^*;\cdots)$.  Similarly, there exists some $u_*, \lambda_*$ such that 
\[g(z) - g(\bar z) \geq f(u_*;  z) - f(u_*; \bar z) = (z-\bar z) (\lambda_*-AX_{t-})-\epsilon.\]
As $g( z) - g(\bar z)$ is scalar, we see that for 
\[\alpha:= \frac{g( z) - g(\bar z) - f(u_*; z) + f(u_*; \bar z)}{f(u^*; z) - f(u^*;\bar z)- f(u_*; z) + f(u_*; \bar z)}\]
we have
\[g(z) - g(\bar z) = (z-\bar z) (\alpha \lambda^{*}+(1-\alpha)\lambda_*-AX_{t-}).\]
Finally, we note that $\lambda^{z,\bar z}_t:=\alpha \lambda^{*}+(1-\alpha)\lambda_*$ satisifes all the requirements of Definition \ref{defn:balanced}.

If the supremum is not attainable, then it is possible (but tedious) to construct an appropriate approximation sequence and to show that the corresponding vectors $\lambda$ have a convergent subsequence in $\ell_2$, using the boundedness properties of $\lambda$. The limit of this sequence will then satisfy the requirements of the theorem. The details are left to the reader.
%Take approximation sequence, this is bounded by gamma AX and gamma^{-1}AX. Hence it has a subsequence which converges on every finite dimensional subset of \bR^N, and by the summability of gamma^{-1}AX it must converge in ell2
\end{proof}

\begin{theorem}[Finite-time comparison theorem]\label{thm:finitecompthm}
Let $(Y, Z)$ and $(Y', Z')$ be the solutions to two BSDEs with drivers $f$ and $f'$. Suppose $f$ is $\gamma$-balanced and $f(\omega, t, y, z)\geq f'(\omega, t, y, z)$ for all $(y,z)$, $dt\times d\mathbb{P}$-a.s.  Then $Y_T\geq Y'_T$ a.s. implies $Y_t\geq Y'_t$ a.s. up to indistinguishability.
\end{theorem}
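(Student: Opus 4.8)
The plan is to reduce the comparison to a linear BSDE via a change of measure engineered by the $\gamma$-balanced structure, and then read off positivity from an explicit representation. Write $\bar Y := Y - Y'$ and $\bar Z := Z - Z'$, so that $\bar Y$ solves
\[\bar Y_t = \bar Y_T + \int_{]t,T]} \big(f(\omega,u,Y_{u-},Z_u) - f'(\omega,u,Y'_{u-},Z'_u)\big)\,du - \int_{]t,T]} \bar Z_u^* dM_u.\]
First I would split the driver difference into three pieces by inserting the intermediate values $f(\omega,u,Y'_{u-},Z_u)$ and $f(\omega,u,Y'_{u-},Z'_u)$, obtaining
\[f(\omega,u,Y_{u-},Z_u) - f'(\omega,u,Y'_{u-},Z'_u) = a_u \bar Y_{u-} + \bar Z_u^*(\lambda_u - AX_{u-}) + g_u,\]
where $a_u := [f(\omega,u,Y_{u-},Z_u) - f(\omega,u,Y'_{u-},Z_u)]/\bar Y_{u-}$ (set to $0$ where $\bar Y_{u-}=0$) is bounded, as $f$ is Lipschitz in $y$; the middle term comes from the $\gamma$-balanced decomposition of Definition \ref{defn:balanced} applied with $z = Z_u$, $z' = Z'_u$, so that $\lambda_u = \lambda^{Z_u,Z'_u}_u$; and $g_u := f(\omega,u,Y'_{u-},Z'_u) - f'(\omega,u,Y'_{u-},Z'_u) \geq 0$ since $f \geq f'$.

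The second step is the measure change. Because $f$ is $\gamma$-balanced, the vector $\lambda_u$ satisfies $\bone^*\lambda_u = 0$ and has entrywise the correct sign pattern with ratios in $[\gamma,\gamma^{-1}]$, so it is itself an admissible jump-rate vector; consequently there is a measure $\bQ$, equivalent to $\bP$ on $\F_T$, under which the chain has these modified rates and
\[\tilde M_t := M_t - \int_{]0,t]} (\lambda_u - AX_{u-})\,du\]
is a (local) martingale. Substituting $dM_u = d\tilde M_u + (\lambda_u - AX_{u-})\,du$, the middle driver term cancels the added drift, and under $\bQ$ the equation for $\bar Y$ becomes the linear BSDE
\[\bar Y_t = \bar Y_T + \int_{]t,T]} (a_u \bar Y_{u-} + g_u)\,du - \int_{]t,T]} \bar Z_u^* d\tilde M_u.\]

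Finally I would solve this explicitly. With the integrating factor $\Gamma_u := \exp(\int_{]0,u]} a_s\,ds)$, the product rule gives $d(\Gamma_u \bar Y_u) = -\Gamma_u g_u\,du + \Gamma_u \bar Z_u^* d\tilde M_u$, so that after integrating over $]t,T]$ and taking $\bQ$-conditional expectations (using that $\int \Gamma_u \bar Z_u^* d\tilde M_u$ is a true $\bQ$-martingale)
\[\Gamma_t \bar Y_t = E^{\bQ}\Big[\Gamma_T \bar Y_T + \int_{]t,T]} \Gamma_u g_u\,du \,\Big|\, \F_t\Big].\]
Since $\Gamma > 0$, $g_u \geq 0$ and $\bar Y_T = Y_T - Y'_T \geq 0$ by hypothesis, the right-hand side is nonnegative, whence $\bar Y_t \geq 0$ $\bQ$-a.s.; as $\bQ \sim \bP$ this gives $Y_t \geq Y'_t$ $\bP$-a.s., and c\`adl\`ag modifications upgrade this to indistinguishability.

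I expect the main obstacle to be the rigorous justification of the change of measure and the attendant integrability. One must verify that the stochastic exponential defining $d\bQ/d\bP$, built from the ratios $e_i^*\lambda_u/e_i^* AX_{u-}$, is a genuine uniformly integrable martingale on $[0,T]$ (so that $\bQ$ is a probability measure equivalent to $\bP$), and that $\bar Z$ remains square-integrable enough under $\bQ$ for $\int \Gamma_u \bar Z_u^* d\tilde M_u$ to be a true martingale rather than merely local. This is exactly where the uniform bounds $[\gamma,\gamma^{-1}]$ of Definition \ref{defn:balanced} are essential: they keep the density process bounded away from $0$ and $\infty$ in the relevant sense and guarantee equivalence of measures, so that a.s. inequalities transfer between $\bQ$ and $\bP$.
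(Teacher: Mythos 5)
Your proof is correct and follows essentially the same route as the paper: the paper deduces the comparison theorem from Lemma \ref{lem:girsanovvalid}, which constructs exactly the measure $\bQ^T$ you build from the $\gamma$-balanced field $\lambda^{Z_u,Z'_u}_u$, absorbing the $z$-difference of drivers into a $\bQ$-martingale, after which the linearization in $y$ with the integrating factor $\Gamma$ and the nonnegativity of $g_u$ and $\bar Y_T$ give the result. The integrability and equivalence issues you flag are precisely what Lemma \ref{lem:girsanovvalid} settles (boundedness of the relative jump rates in $[\gamma,\gamma^{-1}]$), so your argument is a faithful expansion of the deduction the paper leaves to the cited reference.
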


 The finite-time comparison theorem is easy to deduce (see for example \cite{Cohen2009}) from the following lemma.
\begin{lemma}\label{lem:girsanovvalid}
 If $f$ is $\gamma$-balanced, then for any predictable processes $Z,Z'\in H^2_{M,\tloc}$, any process $Y\in S^2_{\tloc}$, any $T<\infty$, there exists a probability measure $\bQ^T$ equivalent to $\bP$ such that
\[\tilde M_t=\int_{]0,t\wedge T]}\big(f(\omega, s, Y_{s-}, Z_s)- f(\omega,s,Y_{s-},Z'_s) \big)ds + \int_{]0,t\wedge T]}(Z_s-Z'_s)^*dM_s\]
is a $\bQ^T$-martingale.
\end{lemma}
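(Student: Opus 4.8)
The plan is to produce $\bQ^T$ as a change of the rate matrix of $X$ on $]0,T]$ and to recognise $\tilde M$ as a stochastic integral against the compensated martingale of the chain under this new law. Write $\delta Z_s := Z_s - Z'_s$ and $\lambda_s := \lambda^{Z_s,Z'_s}_s$, so that by the balancing identity the driver-difference integrand is $\delta Z_s^*(\lambda_s - A X_{s-})$. As noted in the proof of Lemma \ref{lem:balislip}, $\lambda_s = \tilde A_s X_{s-}$, where $\tilde A_s$ coincides, in the column indexed by the current state $X_{s-}$, with a genuine rate-matrix column: its off-diagonal entries are those of $A$ multiplied by factors $\theta_i := e_i^*\lambda_s / e_i^* A X_{s-} \in[\gamma,\gamma^{-1}]$, and $\bone^*\lambda_s\equiv 0$ makes the column sum vanish. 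I would therefore let $\bQ^T$ be the measure under which $X$ is the (path-dependent, time-inhomogeneous) Markov chain with this modified rate on $]0,T]$ and rate $A$ thereafter; equivalently, its compensator is shifted from $\int A X_{s-}\,ds$ to $\int\lambda_s\,ds$.

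First I would realise this change concretely through its density process, the Dol\'eans--Dade exponential $L = \mathcal{E}(\int_{]0,\cdot\wedge T]}\Phi_s^*\,dM_s)$, where the predictable kernel $\Phi_s$ encodes the jump-rate ratios, multiplying $L$ by $\theta_i$ at a jump into $e_i$ and compensating the holding rates continuously. The first main point is to check that $L$ is a strictly positive true $\bP$-martingale on $[0,T]$, so that $\bQ^T := L_T\cdot\bP$ is a probability measure equivalent to $\bP$. This is exactly where the two-sided bound $\theta_i\in[\gamma,\gamma^{-1}]$ is used: it forces the multiplicative jumps of $L$ into $[\gamma,\gamma^{-1}]$, bounded away from $0$ and from $\infty$, which, together with the standing uniform boundedness of the entries of $A$ and the finiteness of $T$, gives the integrability that rules out any loss of mass. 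I would invoke the measure-change machinery of \cite{Cohen2012} (a minor variant of the computation behind Lemma 12 there) rather than reprove it, noting that the new rates $\theta_i A_{ij}\le\gamma^{-1}\sup|A_{ij}|$ remain bounded, so both chains are non-explosive on $[0,T]$.

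With equivalence established, Girsanov's theorem for the chain identifies the $\bQ^T$-compensator of $X$ on $]0,T]$ as $\int\lambda_s\,ds$, so that $M^{\bQ}_t := X_{t\wedge T}-X_0-\int_{]0,t\wedge T]}\lambda_s\,ds$ is a $\bQ^T$-martingale and $\int\delta Z_s^*\,dM_s$ acquires under $\bQ^T$ precisely the drift $\int\delta Z_s^*(\lambda_s - A X_{s-})\,ds$. The role of the term $-A X_{s-}$ in the balancing identity is exactly this: it is the original $\bP$-compensator density of $X$, placed so that the explicit driver-difference drift and the drift picked up by the stochastic integral are designed to cancel, leaving $\tilde M$ equal (up to sign) to a pure stochastic integral $\int\delta Z_s^*\,dM^{\bQ}_s$ against the $\bQ^T$-martingale $M^{\bQ}$, hence a $\bQ^T$-local martingale. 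Here I would note that, since $\bone^*\,dM_s = 0$ and $\bone^*\lambda_s = 0$, nothing changes if $\delta Z_s$ is replaced by its projection orthogonal to $\bone$, which is what makes the kernel and the identity unambiguous.

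Finally I would promote this local martingale to a genuine $\bQ^T$-martingale. The $\bQ^T$-predictable quadratic variation of $M^{\bQ}$ has density built from $\tilde A$, and the same computation as in Lemma \ref{lem:balislip} bounds the associated seminorm by $\gamma^{-1}\|\cdot\|_{M_s}$; thus it suffices to show $E^{\bQ^T}[\int_{]0,T]}\|\delta Z_s\|_{M_s}^2\,ds]<\infty$. This is the second, and I expect more delicate, part of the argument: one transfers the bound $E^{\bP}[\int_{]0,T]}\|\delta Z_s\|_{M_s}^2\,ds]<\infty$, available from $Z,Z'\in H^2_{M,\tloc}$, across the equivalent change of measure, again using the controlled density of \cite{Cohen2012} and the finiteness of $T$ --- finiteness being essential, which is why the statement is local in time and phrased through the $\tloc$ spaces. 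The hypothesis $Y\in S^2_{\tloc}$ plays only the minor role of guaranteeing that $Y_{s-}$ is a.s.\ finite, so that the frozen-$y$ difference $\delta Z_s^*(\lambda_s - A X_{s-})$ is well defined and predictable; note that the balancing identity compares $f$ at the \emph{same} value $Y_{s-}$, so no monotonicity of $f$ in $y$ is needed anywhere.
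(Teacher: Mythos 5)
Your proposal is correct and takes essentially the same approach as the paper: define $\bQ^T$ by replacing the compensator $A X_{t-}$ of $X$ with $\lambda^{Z_t,Z_t'}_t$ on $[0,T]$, use the two-sided bound $e_i^*\lambda/e_i^*AX_{t-}\in[\gamma,\gamma^{-1}]$ together with boundedness of $A$ to get equivalence, recognise $\tilde M$ as the stochastic integral $\int (Z_s-Z'_s)^*d\hat M_s$ against the $\bQ^T$-compensated martingale, and conclude via the predictable $\bQ^T$-quadratic variation. The paper is simply terser, asserting directly that the measure with these jump rates is well defined where you spell out the Dol\'eans--Dade density construction.
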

\begin{proof}
Let $\bQ^T$ be the measure under which $X$ jumps, at time $t\in[0,T]$, to a state $e_i\neq X_{t-}$ at a rate $e_i^*\lambda^{Z_t,Z_t'}_t$, where $\lambda(\cdots)$ is the random field associated with $f$ by Definition \ref{defn:balanced}. This is a predictable bounded process (as $A$ is bounded), and so the measure $\bQ^T$ is well defined. Under this measure, $X$ has a semimartingale decomposition
\[X_t=X_0 + \int_{]0,t]} \lambda^{Z_u,Z_u'}_u du + \hat M_t\]
for $\hat M_t$ a $\bQ^T$ martingale. We can then verify that $\tilde M_t = \int_{]0,t]}(Z_u-Z_{u-})^*d\hat M_u$, and a calculation of the predictable $\bQ^T$-quadratic variation of $\tilde M_t$ guarantees the desired martingale property. As the relative rates of $X$'s jumps under $\bP$ and $\bQ$ are bounded, by assumption on $\lambda$, we can deduce that $\bQ^T$ and $\bP$ are indeed equivalent measures.
\end{proof}

The following lemma extends this measure to all time horizons, and expresses its properties in a useful manner for the study of BSDEs.

\begin{lemma}\label{lem:YisQmart}
Let $Y$ and $\bar Y$ be two processes in $S^2_\tloc$ with dynamics
\begin{align*}
 dY_t &= -f(\omega, t, Y_{t-}, Z_t)dt + Z_t^* dM_t\\
 d\bar Y_t &= -\bar f(\omega, t, \bar Y_{t-}, \bar Z_t)dt + \bar Z_t^* dM_t
\end{align*}
for some processes $Z, \bar Z \in H^2_{M,\tloc}$ and some $\gamma$-balanced drivers $f$, $\bar f$. Then there exists a measure $\bQ$ such that 
\[Y_t - \bar Y_t + \int_{]0,t]} \big(f(\omega, s, Y_{s-}, Z_s) - \bar f(\omega, s, \bar Y_{s-}, Z_s)\big) ds\]
 is a $\bQ$-martingale. 
\end{lemma}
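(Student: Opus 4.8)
The plan is to differentiate the candidate process and show that its drift is annihilated by exactly the same measure change used in Lemma~\ref{lem:girsanovvalid}, now applied to the driver $\bar f$. Writing
\[N_t := Y_t - \bar Y_t + \int_{]0,t]} \big(f(\omega, s, Y_{s-}, Z_s) - \bar f(\omega, s, \bar Y_{s-}, Z_s)\big)\,ds,\]
I would first substitute the given dynamics of $Y$ and $\bar Y$. The crucial observation is that the term $f(\omega, t, Y_{t-}, Z_t)$ appears with opposite signs in $dY_t$ and in the integrand added to form $N$, so it cancels outright, and one is left with
\[dN_t = \big(\bar f(\omega, t, \bar Y_{t-}, \bar Z_t) - \bar f(\omega, t, \bar Y_{t-}, Z_t)\big)\,dt + (Z_t - \bar Z_t)^* dM_t.\]
In particular, only $\bar f$ survives in the finite-variation part, and it appears as a difference in its $z$-argument alone, to which the balanced structure applies directly.

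Since $\bar f$ is $\gamma$-balanced, Definition~\ref{defn:balanced} lets me rewrite this difference, with $\bar\lambda$ the random field associated to $\bar f$, as $(\bar Z_t - Z_t)^*(\bar\lambda^{\bar Z_t, Z_t}_t - AX_{t-})$, whence
\[dN_t = (Z_t - \bar Z_t)^*\big(dM_t - (\bar\lambda^{\bar Z_t, Z_t}_t - AX_{t-})\,dt\big).\]
I would then take $\bQ$ to be, exactly as in the proof of Lemma~\ref{lem:girsanovvalid}, the measure under which $X$ jumps to $e_i\neq X_{t-}$ at rate $e_i^*\bar\lambda^{\bar Z_t, Z_t}_t$. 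Under $\bQ$ the chain has decomposition $dX_t = \bar\lambda^{\bar Z_t, Z_t}_t\,dt + d\hat M_t$ with $\hat M$ a $\bQ$-martingale, so that $dM_t - (\bar\lambda^{\bar Z_t, Z_t}_t - AX_{t-})\,dt = d\hat M_t$. This identifies $N_t = \int_{]0,t]} (Z_s - \bar Z_s)^* d\hat M_s$ as a stochastic integral of a predictable process against a $\bQ$-martingale, which is the desired conclusion; the required local square-integrability follows from $Z,\bar Z\in H^2_{M,\tloc}$ together with the comparability of the $\bP$- and $\bQ$-rates.

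The main obstacle is the one flagged in the statement preceding the lemma: Lemma~\ref{lem:girsanovvalid} only produces a measure $\bQ^T$ on each $\F_T$, whereas here $\bQ$ must live on $\F = \F_\infty$. I would argue that the family $\{\bQ^T\}_{T<\infty}$ is consistent---the restriction of $\bQ^T$ to $\F_S$ equals $\bQ^S$ for $S\le T$, since each is governed by the same bounded predictable intensity $\bar\lambda^{\bar Z_t, Z_t}_t$---and then extend it to $\F_\infty$. The point making this extension valid is that, because $A$ is uniformly bounded and the jump rates under each $\bQ^T$ are comparable to those under $\bP$ (their ratio lying in $[\gamma,\gamma^{-1}]$ by Definition~\ref{defn:balanced}), the rates under $\bQ$ are themselves uniformly bounded; hence $X$ does not explode and the law of $X$ under the prescribed intensities is a genuine probability measure on path space agreeing with each $\bQ^T$ on $\F_T$. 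With $\bQ$ so constructed, the martingale identity, being verifiable on each finite horizon $[0,T]$, transfers to $\bQ$. I expect this extension step, rather than the drift computation, to demand the most care, since one must check that the consistent finite-horizon densities genuinely patch into a single equivalent measure without loss of mass.
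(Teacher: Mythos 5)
Your proof is correct and follows essentially the same route as the paper: isolate the drift of the candidate process, kill it with the finite-horizon measure change of Lemma~\ref{lem:girsanovvalid} (built from the balanced field of $\bar f$ evaluated at $(\bar Z_t, Z_t)$), and patch the consistent family $\{\bQ^T\}$ into a single measure on $\F_\infty$, which the paper does by invoking Kolmogorov's extension theorem. If anything, your computation is the more careful one: the surviving drift is indeed $\bar f(\omega,s,\bar Y_{s-},\bar Z_s)-\bar f(\omega,s,\bar Y_{s-},Z_s)$, forcing the measure to be generated by $\bar\lambda^{\bar Z_s,Z_s}_s$, whereas the rearrangement displayed in the paper's proof, which writes the drift as $f(\omega,s,Y_{s-},Z_s)-f(\omega,s,Y_{s-},\bar Z_s)$, mislabels the driver and ordering (a typo that your version silently corrects).
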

\begin{proof}
 For each $T$, let $\bQ^T$ be the measure given by Lemma \ref{lem:girsanovvalid}. Then for $t<T$, a simple rearrangment shows that 
\[\begin{split}&Y_t - \bar Y_t + \int_{]0,t]} \big(f(\omega, s, Y_{s-}, Z_s) - \bar f(\omega, s, \bar Y_{s-}, Z_s)\big) ds\\
   &=Y_0 - \bar Y_0 +\int_{]0,t]}\big(f(\omega, s, Y_{s-}, Z_s)- f(\omega,s,Y_{s-},\bar Z_s) \big)ds + \int_{]0,t]}(Z_s-\bar Z_s)^*dM_s
 \end{split}\]
and by Lemma \ref{lem:girsanovvalid} this is a $\bQ^T$-martingale up to time $T$. We now note that the measures $\bQ^T$ are consistent, in that $\bQ^T|_{\F_t} = \bQ^t|_{\F_t}$ for any $t\leq T$. As our underlying space is the space of paths of a countable state Markov chain, which can be embedded in the space of paths in $\bR$, Kolmogorov's extension theorem implies the existence of a measure $\bQ$ with $\bQ|_{\F_T}=\bQ^T|_{\F_T}$ for all $T$. 
\end{proof}

\begin{definition}\label{def:QfamDefn}
Let $\mathcal{Q}_\gamma$ denote the family of all measures $\bQ$ where $X$ has compensator $\lambda(\omega,t)$, for $\lambda$ a predictable process with $\bone^*\lambda\equiv 0$ and $(e_i^*\lambda(\omega, t))/(e_i^* A X_{t-}) \in [\gamma, \gamma^{-1}]$ for all $i$, where $0/0:=1$.
\end{definition}
\begin{remark}
Note that $\bP\in\mathcal{Q}_\gamma$, the measures which appear in Lemma \ref{lem:YisQmart} are in $\mathcal{Q}_\gamma$, and, for any $\bQ, \bQ'\in\mathcal{Q}_\gamma$, any stopping time $\tau$, the measure defined by $\tilde \bQ(A) = E^\bQ[E^{\bQ'}[I_A|\F_\tau]]$ is also in $\mathcal{Q}_\gamma$.
\end{remark}

It is worth noting that, in general, the measures in $\bQ_\gamma$ will be singular, even though their restrictions to $\F_T$ are absolutely continuous for every $T$. The following lemma gives us a slightly stronger result.

\begin{lemma} \label{lem:QtauequivPtau}
 Let $\tau$ be a stopping time with $\tau<\infty$ a.s., and let $\bQ\in\mathcal{Q}_\gamma$. Then $\bQ|_{\F_\tau}$ and $\bP|_{\F_\tau}$ are equivalent. 
\end{lemma}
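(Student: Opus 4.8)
The plan is to reduce everything to the local equivalence of $\bP$ and $\bQ$ on each $\F_T$ (which is built into the construction of $\mathcal{Q}_\gamma$ through Lemmas \ref{lem:girsanovvalid}--\ref{lem:YisQmart}, since the jump-rate ratios lie in $[\gamma,\gamma^{-1}]\subset(0,\infty)$), combined with a continuity-in-$T$ argument that transports null sets across the possibly singular boundary at $\{\tau=\infty\}$. The elementary observation driving this is that for any $B\in\F_\tau$ and any finite $T$ one has $B\cap\{\tau\le T\}\in\F_T$, and $B\cap\{\tau\le T\}\uparrow B\cap\{\tau<\infty\}$ as $T\to\infty$. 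Thus whenever two measures share the null sets of every $\F_T$, vanishing of one on the pieces $B\cap\{\tau\le T\}$ forces the other to vanish on $B\cap\{\tau<\infty\}$ by continuity of measure.

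First I would dispose of the implication $\bQ(B)=0\Rightarrow\bP(B)=0$. Applying the observation with the local equivalence gives $\bP(B\cap\{\tau<\infty\})=0$; since $\tau<\infty$ holds $\bP$-a.s.\ by hypothesis, $\bP(B)=\bP(B\cap\{\tau<\infty\})=0$. This direction is routine and uses the standing assumption on $\tau$ directly. The reverse implication $\bP(B)=0\Rightarrow\bQ(B)=0$ is where the content lies: the same argument yields $\bQ(B\cap\{\tau<\infty\})=0$, so it remains to control $\bQ(B\cap\{\tau=\infty\})$. Taking $B=\{\tau=\infty\}\in\F_\tau$, the whole reverse inclusion collapses to the single assertion $\bQ(\tau=\infty)=0$, i.e.\ $\tau<\infty$ $\bQ$-a.s.

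To attack this I would introduce the density process $L_T:=d(\bQ|_{\F_T})/d(\bP|_{\F_T})$, a strictly positive $\bP$-martingale with $L_0=1$ whose multiplicative jumps are the rate-ratios in $[\gamma,\gamma^{-1}]$. Since $\{\tau\le T\}\in\F_{\tau\wedge T}$, optional sampling of $L$ at the bounded time $\tau\wedge T$ gives
\[\bQ(\tau\le T)=E^{\bP}[L_T I_{\{\tau\le T\}}]=E^{\bP}[L_{\tau\wedge T}I_{\{\tau\le T\}}]=E^{\bP}[L_\tau I_{\{\tau\le T\}}],\]
and letting $T\to\infty$ (monotone convergence, using $\tau<\infty$ $\bP$-a.s.) reduces the claim to $\bQ(\tau<\infty)=E^{\bP}[L_\tau]=1$. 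The positivity and finiteness $L_\tau\in(0,\infty)$ come cheaply from regularity: on $[0,\tau]$ there are only finitely many jumps a.s.\ (bounded rates), each contributing a factor in $[\gamma,\gamma^{-1}]$, while the compensator contribution is finite and positive because its integrand is bounded and $\tau<\infty$. In particular $L_\tau>0$ a.s., and Fatou already yields $E^{\bP}[L_\tau]\le1$.

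The main obstacle is the reverse inequality, namely that the stopped density $L^{\tau}=(L_{t\wedge\tau})_{t\ge0}$ is uniformly integrable under $\bP$ (equivalently, closed by $L_\tau$). A crude pathwise bound controls $L_\tau$ only by $\exp(c\tau+c'N_\tau)$, with $N_\tau$ the number of jumps before $\tau$, which is not $\bP$-integrable without further hypotheses; so uniform integrability must instead be extracted from the \emph{two-sided} nature of the bound $[\gamma,\gamma^{-1}]$, which is what prevents $\bQ$ from placing mass at $\{\tau=\infty\}$ by diverting the chain away from the region that $\tau$ observes. I would therefore try to establish $E^{\bP}[L_\tau]=1$ by passing to the reciprocal $1/L$, a positive $\bQ$-martingale, hence $\bQ$-a.s.\ convergent to a finite limit, and use this to rule out escape. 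Making this step rigorous is the delicate heart of the argument, and is precisely where any structural requirement on $\mathcal{Q}_\gamma$ beyond local equivalence should be doing its work.
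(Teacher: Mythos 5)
Your first direction ($\bQ$-null implies $\bP$-null) and your reduction of the converse to the single claim $\bQ(\tau=\infty)=0$ are both correct, and the mechanics (restrict to $\F_T$, use local equivalence, pass to the limit) are exactly those of the paper's proof. The genuine gap is precisely the step you yourself flag as ``the delicate heart'': you never establish $\bQ(\tau=\infty)=0$, and in fact this step cannot be established under the reading of the hypothesis you have adopted. If ``$\tau<\infty$ a.s.'' is interpreted under $\bP$ only, the subgoal is false: take $X$ a continuous-time symmetric nearest-neighbour walk on $\mathbb{Z}$ under $\bP$ (rates $1/2$ to each neighbour, bounded, recurrent), let $\bQ$ be the right-biased walk with rates $2/3$ and $1/3$ (rate ratios $4/3$ and $2/3$, diagonal ratio $1$, so $\bQ\in\mathcal{Q}_{1/2}$), and let $\tau$ be the first hitting time of the state $-1$ started from $0$. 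Then $\tau<\infty$ $\bP$-a.s.\ but $\bQ(\tau=\infty)>0$, so the $\bP$-null set $\{\tau=\infty\}\in\F_\tau$ is not $\bQ$-null and the conclusion of the lemma itself fails. Equivalently, the uniform integrability of the stopped density $L^{\tau}$ that you are after genuinely fails here; no argument via the reciprocal $1/L$ can rescue it, because the statement being targeted is false. (Note the lemma sits in Section 2, before the uniform ergodicity assumption, so you cannot appeal to ergodicity either.)

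The resolution is that the hypothesis must be read as $\tau<\infty$ a.s.\ under both $\bP$ and $\bQ$, which is how the paper reads and uses it: its proof applies monotone convergence ``similarly for $\bQ$'', which requires $\bQ(\tau<\infty)=1$, and wherever the lemma is invoked later the standing assumptions give $E^{\bQ}[(1+\tau)^{1+\beta}|\F_t]\leq K(t)$ for every $\bQ\in\mathcal{Q}_\gamma$, hence finiteness under every such measure. With that reading, the whole proof is just your first paragraph applied symmetrically: for $A\in\F_\tau$ and finite $T$, local equivalence gives $\bP(A\cap\{\tau<T\})=0\Leftrightarrow\bQ(A\cap\{\tau<T\})=0$, and monotone convergence under each measure separately gives $\bP(A)=\lim_{T\to\infty}\bP(A\cap\{\tau<T\})$ and $\bQ(A)=\lim_{T\to\infty}\bQ(A\cap\{\tau<T\})$. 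The density-process and optional-sampling apparatus is unnecessary; you set yourself an impossible subproblem by not granting $\bQ$-a.s.\ finiteness as part of the hypothesis.
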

\begin{proof}
As the relative rates of jumping are bounded, we know that $\bP|_{\F_T}$ and $\bQ|_{\F_T}$ are equivalent for any $T<\infty$. Let $A\in \F_\tau$. Then by definition of $\F_\tau$,
\[\bP(A\cap \{\tau<T\}) = 0 \Leftrightarrow \bQ(A\cap \{\tau<T\})=0\]
Now, as $\tau$ is almost surely finite valued, by the monotone convergence theorem
\[\lim_{T\to\infty} \bP(A\cap \{\tau<T\}) = \bP(A)\]
and similarly for $\bQ$. Hence
\[\begin{split} \bP(A) =0 &\Leftrightarrow \bP(A\cap \{\tau<T\}) =0 \quad \forall T\\ & \Leftrightarrow \bQ(A\cap \{\tau<T\}) =0 \quad \forall T \Leftrightarrow \bQ(A)=0  \end{split}
\]
and so $\bP$ and $\bQ$ are equivalent on $\F_\tau$.
\end{proof}

\section{BSDEs to Stopping times}\label{STBSDE}

In the following, we make great use of the following version of Markov's inequality. 
`For any stopping time $\tau$, any $T>0$, $E[I_{\tau>T}|\mathcal{F}_t] \leq E[\tau^{1+\beta}|\mathcal{F}_t]\,T^{-(1+\beta)}.$'

Our key result is as follows.
\begin{theorem}\label{thm:STBSDE}
 Suppose $\xi$ is $\F_\tau$-measurable and for some nondecreasing functions $K, \tilde K:\bR^+\to[1,\infty[$, some constants $\beta, \tilde\beta>0$,
\begin{align*}
E^{\bQ}[|\xi||\F_t]&\leq K(t),\qquad E^\bQ[(1+\tau)^{1+\beta}|\F_t]\leq K(t),\qquad E^\bQ[K(\tau)^{1+\tilde \beta}|\F_t]\leq \tilde K(t),
\end{align*}
all $\bP$-a.s. for all $\bQ\in\mathcal{Q}_\gamma$ and all $t$. Let $f:\Omega\times\bR^+\times\bR\times\bR^N \to \bR$ be $\gamma$-balanced, and such that for any $y, y', z$,
\begin{align*}
|f(\omega, t, 0, 0)| &\leq c(1+t^{\hat\beta}),\\
\frac{f(\omega, t, y, z)-f(\omega, t, y', z)}{y-y'} &\in [-c, 0]
\end{align*}
for some $c\in\bR$,  some $\hat\beta\in[0,\beta[$. Then the BSDE (\ref{eq:STBSDE}) admits a unique adapted solution satisfying the bound
\[|Y_t| \leq (1+c)K(t).\]
\end{theorem}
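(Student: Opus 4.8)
The plan is to reduce the unbounded-horizon equation (\ref{eq:STBSDE}) to a sequence of finite-horizon BSDEs, each solvable by Theorem \ref{thm:basicBSDEexist}, to extract from them an \emph{a priori} bound that is uniform in the horizon, and then to pass to the limit. The same a priori bound, applied to the difference of two solutions, will yield uniqueness. The whole argument replaces the usual monotonicity-bounded-away-from-zero assumption by the polynomial moment control on $\tau$, so the estimates must be organised so that the only discounting used is the harmless $[-c,0]$ monotonicity in $y$.

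The core is the a priori estimate, which I would establish first. Given a solution $(Y,Z)$, linearise the driver in $y$: since $(f(\omega,t,y,z)-f(\omega,t,y',z))/(y-y')\in[-c,0]$ there is a predictable $\alpha_t\in[-c,0]$ with $f(\omega,t,Y_{t-},Z_t)=f(\omega,t,0,Z_t)+\alpha_t Y_{t-}$, and absorb the $z$-dependence through the $\gamma$-balanced structure, writing $f(\omega,t,0,Z_t)=f(\omega,t,0,0)+Z_t^*(\lambda^{Z_t,0}_t-AX_{t-})$ and changing to the measure $\bQ\in\mathcal{Q}_\gamma$ of Lemma \ref{lem:girsanovvalid} under which $X$ has compensator $\lambda^{Z_t,0}_t$; then $\int Z^*(\lambda-AX)\,du-\int Z^*\,dM$ is a $\bQ$-(local) martingale. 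Solving the resulting linear equation with the integrating factor $\Gamma_{t,u}=\exp(\int_{]t,u]}\alpha_s\,ds)\in(0,1]$ gives
\[
Y_t=E^{\bQ}\Big[\Gamma_{t,\tau}\,\xi+\int_{]t,\tau]}\Gamma_{t,u}\,f(\omega,u,0,0)\,du\;\Big|\;\F_t\Big].
\]
Because $0<\Gamma_{t,u}\le1$ and $|f(\omega,u,0,0)|\le c(1+u^{\hat\beta})$ with $\hat\beta<\beta$, the hypotheses $E^{\bQ}[\,|\xi|\mid\F_t]\le K(t)$ and $E^{\bQ}[(1+\tau)^{1+\beta}\mid\F_t]\le K(t)$ bound the second term by $cK(t)$ (the integral is absorbed into $(1+\tau)^{1+\beta}$ precisely because $\hat\beta<\beta$), yielding $|Y_t|\le(1+c)K(t)$.

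For existence I would fix $n$ and solve, by Theorem \ref{thm:basicBSDEexist}, the BSDE on $[0,n]$ with driver $f(\omega,u,\cdot,\cdot)I_{u\le\tau}$ (Lipschitz by Lemma \ref{lem:balislip} and the $y$-bound) and terminal value $\xi I_{\tau\le n}$ at time $n$ (after a routine truncation of $\xi$ to secure square-integrability, removed in the limit via the same estimate). The estimate above applies verbatim to the solution $(Y^n,Z^n)$ and gives $|Y^n_t|\le(1+c)K(t)$ for all $n$. To see that $(Y^n)$ is Cauchy, observe that for $m>n$ the two solutions satisfy the same equation on $[t,\tau\wedge n]$ and differ at time $n$ only on $\{\tau>n\}$, where the discrepancy is at most $(1+c)K(n)\le(1+c)K(\tau)$; propagating it through the linear representation (discount factor $\le1$) and using conditional H\"older with the hypothesis $E^{\bQ}[K(\tau)^{1+\tilde\beta}\mid\F_t]\le\tilde K(t)$ gives
\[
|Y^n_t-Y^m_t|\le(1+c)\,E^{\bQ}\big[K(\tau)I_{\tau>n}\,\big|\,\F_t\big]\le(1+c)\,\tilde K(t)^{1/(1+\tilde\beta)}\,\bQ(\tau>n\mid\F_t)^{\tilde\beta/(1+\tilde\beta)}.
\]
Since $\tau$ is integrable, the Markov-type inequality recalled before the theorem forces $\bQ(\tau>n\mid\F_t)\to0$, so $(Y^n)$ converges; the quadratic-variation estimates then give convergence of $(Z^n)$ in $H^2_{M,\tloc}$, and the limit $(Y,Z)\in S^2_\tloc\times H^2_{M,\tloc}$ solves (\ref{eq:STBSDE}) and inherits the bound.

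The main obstacle is that every estimate must hold \emph{uniformly over the whole family} $\mathcal{Q}_\gamma$, whose members are in general mutually singular: only their restrictions to each $\F_T$, and — by Lemma \ref{lem:QtauequivPtau} — to $\F_\tau$, are equivalent to $\bP$. The change of measure of Lemma \ref{lem:YisQmart} and the conditional bounds must therefore be applied consistently up to the unbounded time $\tau$, and the stochastic integrals in the representation are a priori only local martingales; turning them into genuine martingales, so that the conditional expectations above are legitimate, requires a localisation whose uniform integrability is supplied by the polynomial moment hypotheses on $\tau$ (and, for the passage to the limit and the removal of the $\xi$-truncation, by the higher-moment bound carried by $\tilde K$). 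Uniqueness then follows from the same representation applied to $\delta Y=Y-Y'$: the difference solves a BSDE with terminal value $0$ at $\tau$, so taking $\xi=0$ forces $Y\equiv Y'$, and matching quadratic variations gives $Z\sim_M Z'$.
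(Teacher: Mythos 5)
Your proposal follows essentially the same route as the paper's own proof: truncation to finite-horizon BSDEs with terminal value $\xi I_{\tau\le n}$ and driver $f I_{u<\tau}$, linearisation in $y$ combined with the $\gamma$-balanced change of measure to get a discounted representation under some $\bQ\in\mathcal{Q}_\gamma$, a uniform-in-$n$ growth bound, a Cauchy estimate via conditional H\"older plus the Markov-type inequality and the moment hypotheses, and uniqueness from the same representation within the class of solutions obeying the bound. The only differences are organisational: your Cauchy estimate compares $Y^m$ and $Y^n$ at the intermediate time $n$ and absorbs the driver tail into the a priori bound on $Y^m_n$ (one H\"older application, where the paper splits terminal and driver contributions and applies H\"older twice), and your uniqueness step compresses the paper's explicit cut-off at $T\wedge\tau$ followed by $T\to\infty$ --- when writing that out, you must invoke the assumed bound on \emph{both} solutions to control $|Y_T-Y'_T|I_{\tau>T}\le 2(1+c)K(T)I_{\tau>T}$ before H\"older and Markov kill the tail, since the bare claim ``terminal value $0$ at $\tau$ forces $Y\equiv Y'$'' is not valid outside that class.
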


\begin{proof}
We first prove a solution exists. Let $\tau_n := \tau\wedge n$. Define 
\begin{equation}\label{eq:rdefn}
 r(\omega, u, y, y', z):= -\frac{f(\omega, u, y, z)-f(\omega, t, y', z)}{y-y'},
\end{equation}
and notice by assumption that $r\in [0,c]$ and is predictable. Define $(Y^n, Z^n)$ to be the solution to the finite-horizon BSDE
\begin{equation}\label{eq:nBSDE}
 Y^n_t = \xi I_{\tau\leq n}+\int_{]t,n]} f(\omega, u, Y^n_u,  Z^n_u)I_{u<\tau} du - \int_{]t,n]} (Z^n_u)^* dM_u.
\end{equation}
As $\xi$ is $\F_\tau$ measurable, we see that $Z^n_u=0$ whenever $u\geq \tau$. Now for $n\geq m$, by Lemma \ref{lem:girsanovvalid} and It\=o's Lemma, there exists a measure $\bQ$ (depending on $n,m$) such that

\begin{align*}
Y_m^n &= E^{\bQ}\Big[\xi I_{\tau\leq n}+ \int_{]\tau_m,\tau_n]} f(\omega, u, Y^n_{u-},  0)du\Big|\F_m\Big]\\
% &= E^{\bQ}\Big[\xi I_{\tau\leq n}+ \int_{\tau_m}^{\tau_n} f(\omega, u, 0,  0)du+ \int_{\tau_m}^{\tau_n} \big(f(\omega, u, Y^n_{u-},  0)-f(\omega, u, 0,  0)\big)du\Big|\F_m\Big]\\
&= E^{\bQ}\Big[\exp\Big(-\int_{]\tau_m,\tau_n]}r(\omega, u, Y_{u-}^n, 0, 0) du\Big)\\
&\qquad \qquad \times \Big(\xi I_{\tau\leq n}+ \int_{]\tau_m,\tau_n]} f(\omega, u, 0,  0)du\Big)\Big|\F_m\Big]\\
 Y_t^n-Y_t^m &= E^{\bQ}\Big[(Y_m^n-\xi I_{\tau\leq m}) +\int_{]t,\tau_n]} f(\omega, u, Y^n_{u-}, Z_u^n) - f(\omega, u, Y^m_{u-}, Z_u^n)\Big|\F_t\Big]\\
&= E^{\bQ}\Big[\exp\Big(-\int_{]t,\tau_n]} r(\omega, u, Y_{u-}^n, Y_{u-}^m, Z_{u}^n) du\Big)(Y_m^n-\xi I_{\tau\leq m})\Big|\F_t\Big]
\end{align*}

Combining these, as $r\geq 0$ and by our assumptions $|\xi|\leq K(\tau)$, we see
\begin{align*}
|Y_t^n-Y_t^m| &\leq E^{\bQ}[|\xi I_{\tau\leq n} - \xi I_{\tau\leq m}|+ \int_{]\tau_m,\tau_n]} |f(\omega, u, 0,  0)|du |\F_t]|\\
&\leq E^{\bQ}[(K(\tau))I_{\tau>m} + c(1+\tau^{\hat\beta})(\tau_n-\tau_m)|\F_t]\\
&\leq E^{\bQ}[K(\tau)I_{\tau>m}|\F_t] + cE^{\bQ}[\tau(1+\tau^{\hat\beta}) I_{\tau>m}|\F_t]\\
&\leq E^{\bQ}[K(\tau)^{1+\tilde\beta}|\F_t]^{1/(1+\tilde\beta)}E^{\bQ}[I_{\tau>m}|\F_t]^{\tilde\beta/(1+\tilde\beta)}\\
&\qquad + cE^{\bQ}[\tau(1+\tau^{\hat\beta})|\F_t]^{1/(1+\alpha)}E^{\bQ}[I_{\tau>m}|\F_t]^{\alpha/(1+\alpha)}\\
&\leq \tilde K(t)^{1/(1+\tilde\beta)}\Big(\frac{K(t)}{m^{1+\beta}}\Big)^{\tilde\beta/(1+\tilde\beta)} + 2cK(t)^{1/\alpha}\Big(\frac{K(t)}{m^{1+\beta}}\Big)^{\alpha/(1+\alpha)}
\end{align*}
where $\alpha := \frac{\beta-\hat\beta}{1+\hat\beta}$. Hence, writing $\tilde\alpha:= (1+\beta) \Big(\frac{\tilde\beta}{1+\tilde\beta}\wedge \frac{\alpha}{1+\alpha}\Big)$, as we know $\tilde\alpha>0$, we have a bound of the form
\[|Y_t^n-Y_t^m| \leq \hat K(t) m^{-\tilde\alpha}\]
for some nondecreasing function $\hat K(t)$, and so $Y^n$ is a Cauchy sequence (converging uniformly in $\omega$ on compacts in time).

Now define $Y_t:=\lim_n Y^n_t$. It is easy to see that this satisfies the desired dynamics. Furthermore,  for some measure $\bQ$ we have the representation
\[Y_t = E^{\bQ}\Big[e^{-\int_{]t,\tau]} r(\omega, u, Y_{u-}, 0,0)du}\Big(\xi +\int_{]t,\tau]} f(\omega, u, 0, 0) du\Big) \Big| \F_t\Big]\]
from which, given our assumptions on $f$ and $\tau$, we can see that 
\[|Y_t| \leq (1+c)K(t).\]

Now suppose we have two solutions $Y$ and $\tilde Y$. Then from the assumed bound and the fact $Y_T = \tilde Y_T = \xi$ on the set $\tau\leq T$, we can see that for some measure $\bQ$ given by Lemma \ref{lem:girsanovvalid},
\begin{align*}
 E^{\bP}[|Y_t - \tilde Y_t|] &= E^{\bP}[|E^{\bQ}[e^{-\int_{]t, T\wedge\tau]} r(\omega, u, Y_{u-}, \tilde Y_{u-}, Z_{t}) du}(Y_T - \tilde Y_T)|\F_t]|] \\
&\leq E^{\bP}[E^{\bQ}[|Y_T - \tilde Y_T||\F_t]]\\
&\leq E^{\bP}[2(1+c)E^{\bQ}[K(T) I_{\tau>T}|\F_t]]\\
&\leq 2(1+c)E^{\bP}[E^{\bQ}[K(\tau)I_{\tau>T} |\F_t]]\\
&\leq 2(1+c)E^{\bP}[E^{\bQ}[K(\tau)^{1+\beta}|\F_t]]^{1/(1+\beta)} \, E^{\bP}[E^{\bQ}[I_{\tau>T}|\F_t]]^{\beta/(1+\beta)}\\
&\leq 2(1+c)\tilde K(0)^{1/(1+\beta)}\Big(\frac{K(t)}{T^{1+\beta}}\Big)^{\beta/(1+\beta)}\\
&\leq 2(1+c)\tilde K(0)^{1/(1+\beta)}K(t)^{\beta/(1+\beta)}T^{-\beta}.
\end{align*}
As $\beta>0$, letting $T\to\infty$ implies that $E^{\bP}[|Y_t - \tilde Y_t|]=0$ for all $t$, and so the solutions coincide almost surely. As the solutions are c\`adl\`ag, they agree up to indistinguishability.
\end{proof}
\begin{remark}
 We emphasise that the above proof permits the case when $f$ does not depend on $y$. In fact, the requirement that $f$ is monotone decreasing is not strictly necessary for our proof. It is enough to guarantee that the discounting terms $\int_{]s,t]} r(\omega, u, y, y', z) du$ are uniformly bounded above for all $s<t$ and all processes $y, y', z$,  as this just introduces a positive constant into our estimates, which will then also appear in the bound on $Y_t$.
\end{remark}

We now extend our existence result to drivers which are only `locally' $\gamma$-balanced and Lipschitz (in $y$), under slightly more restrictive assumptions on the terminal condition and the driver evaluated at zero.

\begin{theorem}\label{thm:STBSDEloc}
 Suppose $\xi$ is $\F_\tau$-measurable and for some constant $k$
\[|\xi|\leq k,\qquad |f(\omega, t, 0, 0)| \leq k,\qquad E^\bQ[(\tau-t)^+|\F_t]\leq k\]
and for some $\beta>0$, some nondecreasing functions $K, \tilde K:\bR\to [1, \infty[$
\[E^\bQ[(1+\tau)^{1+\beta}|\F_t]\leq K(t),\qquad E^\bQ[K(\tau)^{1+\beta}|\F_t]\leq \tilde K(t)\]
all $\bP$-a.s. for all $\bQ\in\mathcal{Q}_\gamma$ and all $t$.

For $y, n\in\bR$, let $y^{(n)}:=(-n)\vee y \wedge n$, and for $z\in\bR^N$ let $z^{(n)}$ be the vector with components $(e_i^*z)^{(n)}$. For $f:\Omega\times\bR^+\times\bR\times\bR^N \to \bR$ a predictable function, suppose that 
\[f^{(n)}(\omega, t, y, z):= f(\omega, t, y^{(n)}, (z- (X_{t-}^*z)\bone)^{(n)})\]
satisfies the requirements of Theorem \ref{thm:STBSDE} for every $n>0$. (That is, $f^{(n)}$ is $\gamma$-balanced in $z$ and Lipschitz decreasing in $y$, where $\gamma$ and the Lipschitz constant can depend on $n$). Then the BSDE (\ref{eq:STBSDE}) with driver $f$ admits a unique bounded solution.
\end{theorem}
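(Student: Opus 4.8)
The plan is to run a truncation argument: solve the BSDE for each truncated driver $f^{(n)}$ using Theorem~\ref{thm:STBSDE}, establish a bound on the solutions that is uniform in $n$, and then show that once $n$ is large enough neither truncation is active along the solution, so that $(Y^n,Z^n)$ in fact solves the original equation (\ref{eq:STBSDE}). First I would invoke Theorem~\ref{thm:STBSDE} for each $n$: by hypothesis $f^{(n)}$ meets its requirements (with $\gamma$ and Lipschitz constant possibly depending on $n$), so there is a unique solution $(Y^n,Z^n)\in S^2_\tloc\times H^2_{M,\tloc}$ of the $f^{(n)}$-BSDE. The first real task is an $n$-independent bound on $Y^n$. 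Using the representation established in the proof of Theorem~\ref{thm:STBSDE}, that for some $\bQ\in\mathcal{Q}_\gamma$
\[Y^n_t = E^{\bQ}\Big[e^{-\int_{]t,\tau]} r(\omega,u,Y^n_{u-},0,0)\,du}\Big(\xi+\int_{]t,\tau]} f^{(n)}(\omega,u,0,0)\,du\Big)\Big|\F_t\Big],\]
together with $r\geq 0$, the identity $f^{(n)}(\omega,u,0,0)=f(\omega,u,0,0)$, and the hypotheses $|\xi|\leq k$, $|f(\omega,u,0,0)|\leq k$, $E^{\bQ}[(\tau-t)^+|\F_t]\leq k$, I would obtain $|Y^n_t|\leq k(1+k)=:\bar k$ for all $n$.

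The crux of the argument, and the step I expect to be the main obstacle, is to show that for $n$ large the truncation in $z$ does not bind along the solution; this is where the discrete structure is decisive. Writing $\tilde Z^n_u := Z^n_u-(X_{u-}^*Z^n_u)\bone$ for the centred version of $Z^n$, the jump of $Y^n$ when $X$ jumps from $X_{u-}$ to $e_i$ is exactly $\Delta Y^n_u=(Z^n_u)^*(e_i-X_{u-})=e_i^*\tilde Z^n_u$, since the $du$-term is continuous. As $|Y^n|\leq\bar k$ forces $|\Delta Y^n_u|\leq 2\bar k$, a predictability argument gives control of $Z^n$: if the predictable set on which some reachable component $|e_i^*\tilde Z^n_u|$ (i.e.\ $e_i^*AX_{u-}>0$) exceeds $2\bar k$ had positive $dt\times d\bP$-measure, then with positive probability $X$ would jump into such an $e_i$ there, producing a jump of $Y^n$ exceeding $2\bar k$, a contradiction. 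Hence $|e_i^*\tilde Z^n_u|\leq 2\bar k$ for $dt\times d\bP$-a.e.\ $u$ and every reachable $e_i$. The same identity shows any \emph{bounded} solution automatically has $Z\in H^2_{M,\tloc}$.

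Finally I would exploit that, because $f$ is $\gamma$-balanced, it is invariant under shifts of $z$ by multiples of $\bone$ and does not depend on the unreachable components of $z$, so replacing $z$ by $(z-(X_{u-}^*z)\bone)^{(n)}$ affects $f$ only through the reachable components of the centred vector. Combining this with $|Y^n|\leq\bar k$ and $|e_i^*\tilde Z^n_u|\leq 2\bar k$, for every $n\geq 2\bar k$ one has $(Y^n_{u-})^{(n)}=Y^n_{u-}$ and no active truncation of the reachable part of $\tilde Z^n$, whence $f^{(n)}(\omega,u,Y^n_{u-},Z^n_u)=f(\omega,u,Y^n_{u-},Z^n_u)$ for a.e.\ $u$; thus $(Y^n,Z^n)$ solves (\ref{eq:STBSDE}). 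Since for $m,n\geq 2\bar k$ the pair $(Y^n,Z^n)$ also solves the $f^{(m)}$-BSDE, the uniqueness in Theorem~\ref{thm:STBSDE} gives $(Y^n,Z^n)=(Y^m,Z^m)$, and this common pair is the required bounded solution. For uniqueness, given any bounded solution $(Y,Z)$, the jump identity again bounds the reachable components of its centred $Z$, so for $n$ large $(Y,Z)$ solves the $f^{(n)}$-BSDE and must coincide with $(Y^n,Z^n)$ by the uniqueness part of Theorem~\ref{thm:STBSDE}.
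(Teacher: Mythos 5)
Your proposal follows the paper's proof step for step: solve the $f^{(n)}$-BSDEs via Theorem~\ref{thm:STBSDE}, use the discounted representation to get the $n$-independent bound $|Y^n_t|\leq k(1+k)$, use the jump identity $\Delta Y^n_t=(Z^n_t)^*\Delta X_t$ together with predictability and total inaccessibility of the jumps to bound the reachable components of the centred $Z^n$ by $2k(1+k)$, conclude that the truncation is inactive once $n\geq 2k(1+k)$, and obtain uniqueness by feeding any bounded solution back into a truncated equation. In the jump/predictability step you are in fact more explicit than the paper.

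One step is misstated, though, and it is worth repairing. You justify removing the truncation by asserting that ``$f$ is $\gamma$-balanced, [hence] invariant under shifts of $z$ by multiples of $\bone$ and [insensitive to] the unreachable components of $z$.'' But $f$ itself is \emph{not} assumed $\gamma$-balanced --- only each truncation $f^{(n)}$ is, and this is the entire point of the theorem (in the intended applications, such as the diode circuit, $f$ is only locally balanced, with $\gamma$ degenerating as $n\to\infty$). The hypotheses say nothing about $f$ at non-centred $z$ or at $z$ with large unreachable components, so the identity $f^{(n)}(\omega,u,Y^n_{u-},Z^n_u)=f(\omega,u,Y^n_{u-},Z^n_u)$ can fail for the particular version $Z^n$ delivered by Theorem~\ref{thm:STBSDE}. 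The fix is small, and is what the paper's phrase ``up to equivalence $\sim_M$'' accomplishes: replace $Z^n$ by the version $Z'_u:=\big(Z^n_u-(X_{u-}^*Z^n_u)\bone\big)^{(n)}$, which is $\sim_M$-equivalent to $Z^n_u$. This leaves the stochastic integral unchanged; it leaves the driver term unchanged because the $\gamma$-balanced driver $f^{(n)}$ does respect $\sim_M$-equivalence (differences along $\bone$ and along unreachable directions are annihilated by $\lambda-AX_{t-}$); and on $Z'$ both truncations act as the identity ($Z'$ is centred with all components bounded by $2k(1+k)\leq n$), so $f^{(n)}(\omega,u,Y^n_{u-},Z'_u)=f(\omega,u,Y^n_{u-},Z'_u)$ and $(Y^n,Z')$ solves (\ref{eq:STBSDE}). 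The same substitution is needed in your uniqueness paragraph: a bounded solution of the $f$-BSDE should be understood up to choice of $\sim_M$-version (equivalently, with its $Z$ taken centred and bounded) before it can be declared a solution of the $f^{(n)}$-BSDE --- a convention the paper's own proof also uses implicitly.
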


\begin{proof}
 First note that as $f^{(n)}$ satisfies the requirements of Theorem \ref{thm:STBSDE} for any $n$, the BSDE with driver $f^{(n)}$ admits a unique solution $(Y^n Z^n)$. We know that this solution has a representation
\[Y^n_t = E^{\bQ}\Big[e^{-\int_{]t,\tau]} r(\omega, u, Y^n_{u-}, 0,0)du}\Big(\xi +\int_{]t,\tau]} f^{(n)}(\omega, u, 0, 0) du\Big) \Big| \F_t\Big]\]
for some $\bQ\in\mathcal{Q}_{\gamma^n}$, where $r$ is as in (\ref{eq:rdefn}), and so 
\[|Y_t^n|\leq E^{\bQ}\Big[|\xi| +\int_{]t,\tau]} |f^{(n)}(\omega, u, 0, 0)| du \Big| \F_t\Big]\leq k+kE^{\bQ}[(\tau-t)^+| \F_t]\leq k(1+k).\]
From the fact that $\Delta Y_t^n = (Z_t^n)^*\Delta X_t$ and $\Delta X_t=e_i-e_j$ for some $i,j$, and that jumps of $X$ are totally inaccessible and $Z$ is predictable, we see that $|e_i^*Z_t^n|\leq 2k(1+k)$ up to equivalence $\sim_M$. Hence we have a bound on $Y^{n}$, $Z^{n}$ independent of $n$. Therefore, provided $n\geq 2k(1+k)$, we see that $Y^n_t, Z_t^n$ is a solution for the BSDE (\ref{eq:STBSDE}) with driver $f$, as the truncation will have no effect.

Conversely, suppose we had two bounded solutions $(Y,Z), (Y', Z')$. Then we could set $n>k'$, where $k'$ is a bound on the solutions,  so that the truncation has no effect, and both processes would solve the BSDE with driver $f^{(n)}$. However this BSDE satisfies Theorem \ref{thm:STBSDE} and so admits unique solutions, hence we have a contradiction.
\end{proof}
\begin{remark}
 The peculiar definition of $f^{(n)}$ in the previous theorem (in particular the unnatural use of $(z- (X_{t-}^*z)\bone)^{(n)}$ rather than $z^{(n)}$, is simply to ensure that $f^{(n)}$ is Lipschitz continuous in the $\|\cdot\|_{M_t}$ seminorm whenever $f$ is, as the basic truncation $z\mapsto z^{(n)}$ is not invariant under $\|\cdot\|_{M_t}$ equivalence.
\end{remark}

The conditions of these statements may seem unusual and restrictive, however the following lemma gives a key example when they are satisfied. It is an immediate consequence of Corollary \ref{cor:hitmoments} in the next section, and so is stated without proof.

\begin{lemma}
 Let $\tau$ be the first hitting time of a set $\Xi\subseteq \X$. Let $\xi$ be a random variable of the form
\[\xi = g(\tau, X_\tau)\]
for some function $g$ with $g(t,x)\leq k (1+t^\beta)$ for some $k, \beta>0$. Then there exist functions $K, \tilde K$ satisfying the requirements of Theorem \ref{thm:STBSDE}.
\end{lemma}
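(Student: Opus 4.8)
The plan is to read off the required functions directly from the uniform hitting-time moment bounds of Corollary \ref{cor:hitmoments} in the next section. I expect that corollary to provide, for each exponent $p>0$, a finite constant $c_p$ (\emph{independent of} $\bQ$) such that
\[
E^{\bQ}[(1+\tau)^p \mid \F_t] \leq c_p(1+t^p)
\]
$\bP$-a.s. simultaneously for all $\bQ\in\mathcal{Q}_\gamma$ and all $t$; morally this comes from writing $\tau = (t\wedge\tau) + (\tau-t)^+$ and bounding the residual hitting time $(\tau-t)^+$ uniformly over the family $\mathcal{Q}_\gamma$ via uniform ergodicity. Granting such a bound, the whole statement reduces to choosing polynomials of sufficiently high degree and verifying the three requirements of Theorem \ref{thm:STBSDE} termwise.

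First I would fix the free parameters of Theorem \ref{thm:STBSDE}, taking any $\beta,\tilde\beta>0$; note that these may be chosen as large as one likes, which is what makes the lemma useful against drivers with large growth exponent $\hat\beta<\beta$. Reading the growth bound as a two-sided bound $|\xi|=|g(\tau,X_\tau)|\leq k(1+\tau^\beta)$ (as the requirement on $\xi$ is two-sided), and using $\tau^\beta\leq(1+\tau)^\beta$ together with the moment bound, I obtain
\[
E^{\bQ}[|\xi|\mid\F_t]\leq k\big(1+E^{\bQ}[(1+\tau)^\beta\mid\F_t]\big)\leq k\big(1+c_\beta(1+t^\beta)\big),
\]
while the same bound applied with exponent $1+\beta$ controls $E^{\bQ}[(1+\tau)^{1+\beta}\mid\F_t]\leq c_{1+\beta}(1+t^{1+\beta})$. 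I would then set $K(t):=C(1+t^{1+\beta})$ with $C\geq 1$ a constant large enough to dominate both right-hand sides, which is available uniformly in $\bQ$ precisely because the $c_p$ do not depend on $\bQ$. Then $K$ is nondecreasing, $K\geq 1$, and the first two requirements hold for all $\bQ\in\mathcal{Q}_\gamma$ and all $t$.

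It remains to produce $\tilde K$. Since $K(\tau)^{1+\tilde\beta}=C^{1+\tilde\beta}(1+\tau^{1+\beta})^{1+\tilde\beta}\leq C'(1+\tau^{(1+\beta)(1+\tilde\beta)})$ for a suitable constant $C'$, a final application of the moment bound with exponent $p=(1+\beta)(1+\tilde\beta)$ yields
\[
E^{\bQ}[K(\tau)^{1+\tilde\beta}\mid\F_t]\leq C'\big(1+c_p(1+t^{p})\big)=:\tilde K(t),
\]
which is again nondecreasing and bounded below by $1$, with the bound holding uniformly over $\bQ\in\mathcal{Q}_\gamma$. This completes the verification of all three requirements.

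The only genuinely substantial step is the input bound itself: establishing that every conditional polynomial moment of the hitting time $\tau$ is controlled \emph{uniformly over the entire family} $\mathcal{Q}_\gamma$, rather than merely under $\bP$. This is exactly the ergodicity content carried by Corollary \ref{cor:hitmoments}; once it is in hand the present lemma is pure bookkeeping with H\"older-type estimates replaced by crude polynomial comparisons, which is why the paper defers the real work to the next section and states this consequence without proof.
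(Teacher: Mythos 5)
Your proposal is correct and is essentially the argument the paper intends: the lemma is stated without proof as an ``immediate consequence'' of Corollary \ref{cor:hitmoments}, and your polynomial bookkeeping---including the key observation that the constants are uniform over $\bQ\in\mathcal{Q}_\gamma$ because they descend from the uniform exponential bound of Theorem \ref{thm:exphtbounds}---is exactly what that immediacy amounts to. The only detail worth adding is that Corollary \ref{cor:hitmoments} is stated for the hitting time of a single state $\hat x$ while the lemma concerns a set $\Xi$; this is harmless, since for any $\hat x\in\Xi$ one has $\tau\leq\hat\tau$ pointwise, so every conditional moment of $\tau$ is dominated by the corresponding moment of $\hat\tau$.
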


\begin{remark}
 Theorem \ref{thm:STBSDE} (but not Theorem \ref{thm:STBSDEloc}) would work equally well for Brownian-Motion-Based BSDEs, with the corresponding definition of the family of measures $\mathcal{Q}_\gamma$.
\end{remark}

\begin{theorem}[Comparison theorem]\label{thm:compthm}
Let $(Y, Z)$ and $(Y', Z')$ be the solutions to two BSDEs with drivers $f, f'$ and terminal values $\xi, \xi'$ satisfying the conditions of Theorem \ref{thm:STBSDE} or \ref{thm:STBSDEloc}. Suppose $f(\omega, t, y, z)\geq f'(\omega, t, y, z)$ for all $(y,z)$, $dt\times d\mathbb{P}$-a.s. Then $\xi\geq \xi'$ a.s. implies $Y_t\geq Y'_t$ a.s. up to indistinguishability. Furthermore, $Y_t = Y'_t$ a.s.~on $A\in\F_t$ if and only if  $Y_s= Y'_s$ a.s.~on $A$ for each $s>t$ and $f(\omega, s, Y_{s-}, Z_s) = f'(\omega, s, Y_{s-}',  Z_s')$ on $A\times]t,\tau]$, $d\bP\times dt$-a.s.
\end{theorem}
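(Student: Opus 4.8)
The plan is to reduce both assertions to a single linearised representation of the difference $Y-Y'$, in the spirit of the uniqueness argument of Theorem \ref{thm:STBSDE}, and then to read off the inequality and the equality characterisation from the signs of the terms that appear. I would first treat the case where $f,f'$ satisfy the hypotheses of Theorem \ref{thm:STBSDE} directly, deferring the locally-balanced case of Theorem \ref{thm:STBSDEloc} to a truncation argument at the end. Decompose the driver difference along the two solutions as
\[
f(\omega,u,Y_{u-},Z_u)-f'(\omega,u,Y'_{u-},Z'_u)=-r_u(Y_{u-}-Y'_{u-})+(Z_u-Z'_u)^*(\lambda^{Z_u,Z'_u}_u-AX_{u-})+\delta_u,
\]
where $r\in[0,c]$ is the linearisation (\ref{eq:rdefn}) obtained from the monotonicity of $f$ in $y$, the middle term uses that $f$ is $\gamma$-balanced, and $\delta_u:=f(\omega,u,Y'_{u-},Z'_u)-f'(\omega,u,Y'_{u-},Z'_u)\geq 0$ by the hypothesis $f\geq f'$. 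Applying Lemma \ref{lem:girsanovvalid} with the $\lambda$ attached to $f$ gives a measure $\bQ\in\mathcal{Q}_\gamma$ under which the $(Z-Z')$-term becomes a martingale, and using $e^{-\int r}$ as an integrating factor yields, for every finite $T$,
\[
Y_t-Y'_t=E^{\bQ}\Big[e^{-\int_{]t,\tau\wedge T]}r_u\,du}(Y_{\tau\wedge T}-Y'_{\tau\wedge T})+\int_{]t,\tau\wedge T]}e^{-\int_{]t,u]}r_s\,ds}\,\delta_u\,du\,\Big|\,\F_t\Big].
\]

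For the inequality I would split the terminal term over $\{\tau\leq T\}$, where $Y_{\tau\wedge T}-Y'_{\tau\wedge T}=\xi-\xi'\geq 0$, and $\{\tau>T\}$, where $|Y_T-Y'_T|$ is controlled by the growth bound $2(1+c)K(T)$ of Theorem \ref{thm:STBSDE}. Since $r,\delta\geq 0$, every contribution except the tail is nonnegative, and the tail $E^{\bQ}[K(T)I_{\tau>T}|\F_t]$ is dominated, uniformly over $\bQ\in\mathcal{Q}_\gamma$, by $\tilde K(t)^{1/(1+\beta)}\big(K(t)T^{-(1+\beta)}\big)^{\beta/(1+\beta)}$ by exactly the Markov-inequality estimate of the uniqueness proof. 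Letting $T\to\infty$ gives $Y_t-Y'_t\geq 0$, $\bP$-a.s.\ (after transferring from $\bQ$ by Lemma \ref{lem:QtauequivPtau}) and up to indistinguishability by c\`adl\`ag regularity. The same limit (monotone convergence on the nonnegative $\delta$-integral, and the tail estimate for the terminal term) upgrades the identity to
\[
Y_t-Y'_t=E^{\bQ}\Big[\Theta\,\Big|\,\F_t\Big],\qquad \Theta:=e^{-\int_{]t,\tau]}r_u\,du}(\xi-\xi')+\int_{]t,\tau]}e^{-\int_{]t,u]}r_s\,ds}\,\delta_u\,du\geq 0.
\]

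For the equality characterisation, the backward implication is immediate: if $Y_s=Y'_s$ on $A$ for every $s>t$, right-continuity of $Y,Y'$ forces $Y_t=Y'_t$ on $A$. For the forward implication, suppose $Y_t=Y'_t$ a.s.\ on $A\in\F_t$. Since $A\in\F_t$ and $\Theta\geq 0$, the vanishing of $E^{\bQ}[\Theta\mid\F_t]$ on $A$ forces $\Theta=0$ a.s.\ on $A$; as $e^{-\int r}>0$ this gives both $\xi=\xi'$ and $\delta_u=0$ for $dt$-a.e.\ $u\in]t,\tau]$, on $A$. Feeding these back into the representation started at any $s>t$ shows the corresponding nonnegative payoff vanishes on $A$, whence $Y_s=Y'_s$ on $A$; the $y$-equality then propagates to $Z_s\sim_M Z'_s$ on $A\times]t,\tau]$ via $\Delta Y=Z^*\Delta X$ (totally inaccessible jumps, $Z$ predictable, as in the proof of Theorem \ref{thm:STBSDEloc}). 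Combining $Y_{s-}=Y'_{s-}$, $Z_s\sim_M Z'_s$ (so $f(\cdots,Z_s)=f(\cdots,Z'_s)$ by Lemma \ref{lem:balislip}), and $\delta_s=0$ then yields $f(\omega,s,Y_{s-},Z_s)=f'(\omega,s,Y'_{s-},Z'_s)$ on $A\times]t,\tau]$, with all $\bQ$-a.s.\ statements transferred to $\bP$ through Lemma \ref{lem:QtauequivPtau}. The locally-balanced case is handled by applying the foregoing to the truncated drivers $f^{(n)},f'^{(n)}$—the inequality $f\geq f'$ is preserved under the common truncation, and both truncations retain the balanced and Lipschitz-decreasing structure—and choosing $n$ larger than the uniform bound on the solutions so that truncation is inactive.

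I expect the main obstacle to be the forward direction of the equality characterisation: one must ensure the nonnegative representation is available at every $s>t$ simultaneously, keep track that $\bQ$-null and $\bP$-null sets coincide only on $\F_\tau$ (Lemma \ref{lem:QtauequivPtau}), and justify that the passage from $Y_s=Y'_s$ to $Z_s\sim_M Z'_s$ holds genuinely in the seminorm $\|\cdot\|_{M_s}$ rather than merely pointwise, which is precisely where the predictability of $Z$ and the total inaccessibility of the chain's jumps are indispensable.
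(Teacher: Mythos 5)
Your proof is correct, and it reaches the conclusion by a genuinely different mechanism at the central step, even though it rests on the same ingredients as the paper (the linearisation $r$ from (\ref{eq:rdefn}), the $\gamma$-balanced decomposition, and the measure change of Lemmas \ref{lem:girsanovvalid} and \ref{lem:YisQmart}). The paper proves the inequality by contradiction with a stopping-time argument: assuming $Y_t<Y'_t$ on some $B\in\F_t$, it applies the $\bQ$-martingale representation of Lemma \ref{lem:YisQmart} at $\tau^*=\inf\{s\geq t: Y_s-Y'_s\geq 0\}\leq\tau$, discards the nonnegative terminal term, and is left with $I_B|Y_t-Y'_t|\leq E^\bQ\big[-\int_{]t,\tau^*]}r\,I_B|Y_{u-}-Y'_{u-}|\,du\,\big|\,\F_t\big]\leq 0$; no horizon truncation or tail estimate appears. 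You instead work on $[t,\tau\wedge T]$ with an integrating factor, kill the tail $E^\bQ[K(T)I_{\tau>T}|\F_t]$ by the Markov/H\"older estimate borrowed from the uniqueness proof of Theorem \ref{thm:STBSDE}, and let $T\to\infty$. The trade-off: the paper's route is shorter and uses the growth hypotheses only implicitly, but it silently invokes optional sampling of the $\bQ$-martingale at the unbounded stopping time $\tau^*$, whose justification (uniform integrability) would in practice come from exactly the bounds you deploy explicitly; your route only ever uses the martingale property on bounded horizons, where Lemma \ref{lem:girsanovvalid} applies verbatim. Your approach also pays a dividend in the second assertion: your limit representation $Y_t-Y'_t=E^\bQ[\Theta\,|\,\F_t]$ with $\Theta\geq 0$ retains the integral of $\delta_u=f(\omega,u,Y'_{u-},Z'_u)-f'(\omega,u,Y'_{u-},Z'_u)$, whereas the representation displayed in the paper's proof of the ``if and only if'' part shows only the discounted $\xi-\xi'$ and omits this term, which is precisely what is needed to extract the driver-equality statement; the paper leaves that step as ``easy to verify'', while you carry it out (vanishing of $\xi-\xi'$ and of $\delta$, propagation to $Y_s=Y'_s$, then $Z_s\sim_M Z'_s$ via the jump identity, then driver equality via Lemma \ref{lem:balislip}). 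You also handle the Theorem \ref{thm:STBSDEloc} case by an explicit truncation argument, which the paper's proof does not address separately. Two cosmetic points only: the single measure $\bQ$ your $T\to\infty$ limit requires is supplied by the Kolmogorov-extension step inside Lemma \ref{lem:YisQmart} (Lemma \ref{lem:girsanovvalid} alone yields one measure $\bQ^T$ per horizon), and the transfer of null sets between $\bQ$ and $\bP$ via Lemma \ref{lem:QtauequivPtau} is legitimate because $\Theta$ and the sets involved are $\F_\tau$-measurable --- both are facts you cite, so these are matters of emphasis rather than gaps.
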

\begin{proof}
From Lemma \ref{lem:YisQmart}, we know that there is a measure $\bQ$ such that for any stopping time ${\tau^*}\leq \tau$,
\[
 Y_t-Y'_t = E^{\bQ}\Big[\Big(Y_{\tau^*}-Y'_{\tau^*} +\int_{]t,\tau^*]} f(\omega, u, Y_{u-}, Z_u)-f'(\omega, u, Y'_{u-}, Z_u) du\Big) \Big| \F_t\Big].
\]
From Lemma \ref{lem:QtauequivPtau}, we know that the stated assumptions $\xi\geq \xi'$, $f\geq f'$ hold both $\bP$ and $\bQ$-a.s. Suppose we have a set $B\in\F_t$ such that $Y_t-Y'_t<0$ on $B$. Then define the stopping time
\[\tau^* = \inf\{s\geq t:Y_s-Y'_s\geq 0\}\leq \tau.\]
From this and the definition of $r$ in (\ref{eq:rdefn}), it is easy to deduce that 
\[I_B|Y_t-Y'_t| \leq E^{\bQ}\Big[-\int_{]t,\tau^*]} r(\omega, u, Y_{u-}, Y'_{u-}, Z_u) I_B|Y_{u-}-Y'_{u-}| du \Big| \F_t\Big].\]
However, as $r\geq 0$, the right hand side is nonpositive, so $I_B|Y_t-Y'_t| =0$, that is, $Y_t\geq Y'_t$ except on a null set.

Now, for any $A\in\F_t$, it is easy to verify from the representation
\[Y_t-Y'_t=E^{\bQ}\Big[\exp\Big(-\int_{]t, \tau]} r(\omega, u, Y_{u-}, Y_{u-}', Z_{u}) du\Big)(\xi-\xi')\Big|\F_t\Big]\]
 that $I_A(Y_t-Y'_t)=0$ if and only if $I_A(Y_s-Y'_s)=0$ a.s.~for each $s>t$ and thence $f(\omega, s, Y_{s-}, Z_s) = f'(\omega, s, Y_{s-}',  Z_s')$ on $A\times]t,\tau]$, $d\bP\times dt$-a.s.
\end{proof}

For completeness, we now state a useful result which gives us a solution of the form $Y_t=u(t, X_t)$. In \cite[Corollary 2]{Cohen2011b} this result is stated, but only for the case $N<\infty$ and $T$ deterministic.
\begin{theorem}\label{thm:Ymarkov}
Let $\tau$ be the first hitting time of a set $\Xi$, and let $Y$ be the solution to a BSDE with Markovian terminal condition $Y_\tau = \phi(\tau, X_\tau)$, for some bounded deterministic function $\phi:\bR^+\times \mathbb{R}^N\to\mathbb{R}$. Suppose $f$ satisfies the requirements of Theorem \ref{thm:STBSDE} or \ref{thm:STBSDEloc} and is Markovian in the sense that $f(\omega, t, y, z)= \tilde f(X_{t-}, t, y, z)$ for some $\tilde f$. Then there exists a measurablefunction $u:\bR^+\times\X\to\bR$ such that $Y_t = u(t,X_t)$ for all $t\leq\tau$. Furthermore, $u$ satisfies
\begin{itemize}
\item $u(t,x) = \phi(t, x)$ on $\bR^+\times \Xi$,
\item the associated vector $\mathbf{u}_t$ defined by $e_i^* \mathbf{u}_t = u(t,e_i)$ satisfies the $N$-dimensional ODE system
\[d\mathbf{u}_t = -(\mathbf{f}(t,\mathbf{u}_t) +A^* \mathbf{u}_t) dt \qquad \text{for }t<\tau\]
where $e_i^*\mathbf{f}(t,\mathbf{u}):= \tilde f(e_i, t, e_i^*\mathbf{u}, \mathbf{u})$, 
\item and the solution process $Z$ is given by $Z_t=\mathbf{u}_t$, up to equivalence $\sim_M$. In particular, note that the BSDE has a solution $Z$ which is deterministic.
\end{itemize}
\end{theorem}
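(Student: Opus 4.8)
The plan is to obtain the Markovian representation $Y_t=u(t,X_t)$ from a conditional-independence (Markov) argument, and then to read off both the ODE and the form of $Z$ directly from the BSDE dynamics, using the $\sim_M$-invariance supplied by Lemma \ref{lem:balislip}.

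First I would construct $u$. Fix a time $t_0$ and regard the BSDE on $[t_0,\tau]$ as being driven by the post-$t_0$ evolution of the chain. Since the driver $\tilde f(X_{t-},\cdot)$ and the terminal value $\phi(\tau,X_\tau)$ are measurable functionals of the path $(X_s)_{s\geq t_0}$ alone, applying Theorem \ref{thm:STBSDE} or \ref{thm:STBSDEloc} to the chain observed from $t_0$ produces a solution adapted to $\mathcal{G}_s:=\sigma(X_u:t_0\leq u\leq s)$; by the uniqueness in those theorems this agrees with the restriction of the global solution. In particular $Y_{t_0}$ is $\mathcal{G}_{t_0}=\sigma(X_{t_0})$-measurable, so there is a function $u(t_0,\cdot)$ with $Y_{t_0}=u(t_0,X_{t_0})$, defined by $u(t_0,e_i):=Y_{t_0}$ on $\{X_{t_0}=e_i\}$. (Equivalently, $Y_{t_0}$ is both $\F_{t_0}$- and $\sigma(X_s:s\geq t_0)$-measurable, and the Markov property makes these $\sigma$-algebras conditionally independent given $X_{t_0}$, forcing $\sigma(X_{t_0})$-measurability.) For $e_i\in\Xi$ we have $\tau=t_0$ on $\{X_{t_0}=e_i\}$, so $u(t_0,e_i)=\phi(t_0,e_i)$, which is the asserted boundary condition.

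Next I would extract regularity in time and the ODE. On any excursion on which the chain rests at a fixed state $e_i\in\X\setminus\Xi$, equation (\ref{eq:STBSDE}) together with $dM_t=dX_t-AX_{t-}\,dt$ gives, since $X$ does not jump, $dY_t=-\big(f(\omega,t,Y_{t-},Z_t)+Z_t^*AX_{t-}\big)\,dt$, so substituting $Y=u(\cdot,e_i)$ shows $t\mapsto u(t,e_i)$ is absolutely continuous. Writing $Y_t=\mathbf{u}_t^*X_t$ and equating the finite-variation parts of the two expressions for $dY_t$ yields, whenever $X_{t-}=e_i$,
\[
\frac{d}{dt}\,e_i^*\mathbf{u}_t=-\big(\tilde f(e_i,t,e_i^*\mathbf{u}_t,\mathbf{u}_t)+e_i^*A^*\mathbf{u}_t\big)=-e_i^*\big(\mathbf{f}(t,\mathbf{u}_t)+A^*\mathbf{u}_t\big),
\]
which is the $i$-th component of the claimed ODE for $t<\tau$ (the components with $e_i\in\Xi$ being pinned to $\phi(t,e_i)$ and entering only as data on the right-hand side). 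Here I use that, by Lemma \ref{lem:balislip}, both a $\gamma$-balanced driver and the map $v\mapsto v^*AX_{t-}$ are invariant under $\sim_M$-equivalence in $z$, so once $Z_t\sim_M\mathbf{u}_t$ is established the $z$-argument $Z_t$ may be replaced by $\mathbf{u}_t$. The identification $Z_t\sim_M\mathbf{u}_t$ itself I would obtain from the jumps: at a jump time $s\leq\tau$ from $e_j$ to $e_i$, the martingale term gives $\Delta Y_s=Z_s^*\Delta M_s=Z_s^*(e_i-e_j)$, while continuity of $u(\cdot,\cdot)$ in time gives $\Delta Y_s=u(s,e_i)-u(s,e_j)=(e_i-e_j)^*\mathbf{u}_s$; since jumps of $X$ are totally inaccessible and $Z$ is predictable, this must match for every reachable target (those with $e_i^*Ae_j>0$), and these are exactly the directions seen by $\|\cdot\|_{M_s}$, so $Z_s\sim_M\mathbf{u}_s$ is deterministic. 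This is the same predictability argument used in the proof of Theorem \ref{thm:STBSDEloc}.

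Finally, the pointwise identity $Y_t=u(t,X_t)$ holds for each fixed $t$ almost surely, and since $t\mapsto u(t,e_i)$ is continuous and $Y$ is c\`adl\`ag it extends to all $t\leq\tau$ off a single null set. The main obstacle is the construction step: rigorously showing that $Y_{t_0}$ is a functional of the future path alone — that the BSDE on $[t_0,\tau]$ may be solved within the post-$t_0$ filtration, with the integrability hypotheses transferring to the shifted problem, and that this future-adapted solution coincides with the global one by uniqueness — while controlling measurability and the $\ell_2$-valued stochastic integrals uniformly over the countable state space. Once the Markov representation is secured, the remaining steps are essentially bookkeeping with the Doob--Meyer decomposition (\ref{ref:DMrepMarkovChain}) and the $\sim_M$-invariance from Lemma \ref{lem:balislip}.
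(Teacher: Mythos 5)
Your proposal is correct in outline and reaches all three conclusions, but it is not the paper's proof: the two arguments share only their first step. Like the paper, you obtain $Y_t=u(t,X_t)$ from the Markov property; the paper does this in one line (Markov property plus the Doob--Dynkin lemma applied to the conditional-expectation representation of $Y_t$), whereas you justify it by solving the BSDE in the post-$t_0$ filtration and identifying that solution with the global one via the uniqueness in Theorem \ref{thm:STBSDE}/\ref{thm:STBSDEloc}. Your version is in fact more careful: since $Y_{u-}$ and $Z_u$ appear inside the conditional expectation, the paper's replacement of conditioning on $\F_t$ by conditioning on $X_t$ implicitly requires exactly what you prove, namely that the solution pair is a functional of the post-$t$ path. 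The second halves diverge genuinely: for the ODE satisfied by $\mathbf{u}_t$ and the identification $Z_t=\mathbf{u}_t$ up to $\sim_M$, the paper simply cites \cite[Theorem 3.2]{Cohen2011b} (remarking that it extends to $N=\infty$), while you rederive both from scratch --- absolute continuity of $t\mapsto u(t,e_i)$ from the pathwise dynamics on no-jump intervals, the identification $Z_s\sim_M\mathbf{u}_s$ by matching jumps of $Y$ against $u(s,e_i)-u(s,e_j)$ and playing predictability of $Z$ against total inaccessibility of the jump times (the same device the paper uses in the proof of Theorem \ref{thm:STBSDEloc}), and then $\sim_M$-invariance of $\gamma$-balanced drivers and of $v\mapsto v^*AX_{t-}$ (both consequences of Lemma \ref{lem:balislip} and the estimate (\ref{eq:vAXLip}) in its proof) to close the ODE. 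What the citation buys the paper is brevity and reliance on an established result; what your derivation buys is self-containedness, validity directly for $N=\infty$, and an explicit display of where $\gamma$-balancedness enters. The price, which you correctly flag, is bookkeeping: making $Y_t=u(t,X_t)$ hold simultaneously for all $t$ off a single null set (fixed-$t$ identities on a countable dense set, right-continuity of $Y$, and the time-continuity of $u$ you establish), and transferring the integrability hypotheses to the shifted problem, which is routine for hitting times by Corollary \ref{cor:hitmoments}.
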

\begin{proof}
 First note that, by the Markov property, we have
\begin{align*}
 Y_t &= E\Big[\phi(\tau, X_\tau) + \int_{]t,\tau]} f(X_{u-}, u, Y_{u-}, Z_{u}) du\Big|\F_t\Big]\\
& = E\Big[\phi(\tau, X_\tau) + \int_{]t,\tau]} f(X_{u-}, u, Y_{u-}, Z_{u}) du\Big|X_t\Big]\\
&= u(t, X_t)
\end{align*}
for some function $u$, where the third line follows from the Doob--Dynkin Lemma, which implies that for each $t$, as $Y_t$ is $\sigma(X_t)$-measurable, $Y_t$ is equal to a measurable function of $X_t$. Right-continuity of $Y$ in $t$ ensures that the function $u$ is measurable in the product space. Clearly $u(t,x) = \phi(t,x)$ on the set $t=\tau$, that is, on $\bR^+\times \Xi$. Applying \cite[Theorem 3.2]{Cohen2011b} (modified trivially to allow $N=\infty$ if necessary) we have the desired dynamics of $u$ and the statement $Z_t = \mathbf{u}_t$.
\end{proof}

We can also obtain a version of this result under which $Y$ does not depend on time.
\begin{theorem}\label{thm:Ytimhom}
Let $\tau$ be the first hitting time of a set $\Xi\subseteq\X$ and let $f:\X\times\bR^+\times\bR^N\to\bR$ satisfy the conditions of Theorem \ref{thm:STBSDEloc}. Consider the BSDE
\[Y_t = \phi(X_\tau) - \int_{]t,\tau]} f(X_{u-}, Y_{u-}, Z_u)du + \int_{]t,\tau]} Z_udM_u\]
where $\phi$ is a bounded function $\X\to\bR$. Then there exists a bounded function $u:\X\to\bR$ such that $Y_t = u(X_{t})$ and $e_i^*Z_t = u(e_i)$, so $Z$ is constant. The vector $\mathbf{u}$ defined by $e_i^*\mathbf{u}= u(e_i)$ satisfies the equation
\[f(x, x^*\mathbf{u}, \mathbf{u})  =-  \mathbf{u}^* Ax \qquad \text{for }x\in\X\setminus\Xi,\]
with boundary values $u(x)=\phi(x)$ for $x\in\Xi$.
\end{theorem}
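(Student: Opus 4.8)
The plan is to obtain the claim from Theorem~\ref{thm:Ymarkov}, whose Markovian representation already supplies a value function together with an associated ODE, and then to remove the time variable by exploiting that none of the data---the driver $f$, the terminal function $\phi$, the (constant) generator $A^*$, or the hitting-time structure of $\tau$---refers to absolute time. Once time-independence of the value function is established, the stated algebraic identity is exactly the equilibrium condition of the ODE appearing in Theorem~\ref{thm:Ymarkov}, and constancy of $Z$ is immediate from the identification $Z_t=\mathbf{u}_t$.

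First I would verify the hypotheses and record the Markovian form. Since $f$ is Markovian and autonomous and $\phi(X_\tau)$ is a special case of a terminal value $\phi(\tau,X_\tau)$ with no genuine $\tau$-dependence, Theorem~\ref{thm:STBSDEloc} provides a unique bounded solution $(Y,Z)$, and Theorem~\ref{thm:Ymarkov} yields a measurable $u:\bR^+\times\X\to\bR$ with $Y_t=u(t,X_t)$ for $t\leq\tau$, with $Z_t=\mathbf{u}_t$ where $e_i^*\mathbf{u}_t=u(t,e_i)$, with boundary condition $u(t,x)=\phi(x)$ on $\Xi$, and with $\mathbf{u}_t$ solving $\tfrac{d}{dt}\mathbf{u}_t=-(\mathbf{f}(\mathbf{u}_t)+A^*\mathbf{u}_t)$ on $\{t<\tau\}$, where $e_i^*\mathbf{f}(\mathbf{u})=f(e_i,e_i^*\mathbf{u},\mathbf{u})$. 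Here I am using that the generator $A$ is constant in time, which is precisely what makes a time-homogeneous solution plausible.

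The hard part is showing that $u(t,x)$ does not depend on $t$, which I would do by a restart-and-uniqueness argument. Fix $s\geq 0$ and work on $\{\tau>s\}$. By the Markov property at time $s$, conditionally on $\F_s$ the shifted chain $\bar X_r:=X_{s+r}$ is again a time-homogeneous chain with generator $A^*$ started from $X_s$, and because $\tau$ is the first hitting time of $\Xi$ we have $\tau=s+\bar\tau$ with $\bar\tau=\inf\{r:\bar X_r\in\Xi\}$. Shifting the defining equation, the processes $\bar Y_r:=Y_{s+r}$ and $\bar Z_r:=Z_{s+r}$ solve the \emph{same} BSDE, with the same driver $f$, the same terminal function $\phi$, and the same hitting set $\Xi$, now started from $\bar X_0=X_s$; the integrability and balance hypotheses of Theorem~\ref{thm:STBSDEloc} survive the shift because they are imposed uniformly in $t$ and over all $\bQ\in\mathcal{Q}_\gamma$, and $\mathcal{Q}_\gamma$ is stable under the pasting that appears when conditioning at $\F_s$. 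Uniqueness of the bounded solution, together with the Markovian representation of Theorem~\ref{thm:Ymarkov} applied to the restarted problem (whose value function is the same $u$, being determined by the same data), gives $\bar Y_0=u(0,X_s)$. Comparing with $\bar Y_0=Y_s=u(s,X_s)$ forces $u(s,x)=u(0,x)$ for every state $x$ the chain can occupy before $\tau$, while on $\Xi$ both sides equal $\phi(x)$; in the irreducible case this is all of $\X\setminus\Xi$, and hence $u(t,x)=:u(x)$ is independent of $t$. I expect the measurable bookkeeping of the conditioning on $\{\tau>s\}$, the attainability issue just noted, and the check that the restarted data still meet the moment conditions to be the points demanding the most care.

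Finally, with $u$ time-independent, the vector $\mathbf{u}_t\equiv\mathbf{u}$ is constant, so $Z_t=\mathbf{u}_t=\mathbf{u}$ is a constant (deterministic) process and $\tfrac{d}{dt}\mathbf{u}_t=0$. Substituting into the ODE gives $\mathbf{f}(\mathbf{u})+A^*\mathbf{u}=0$ on the states off $\Xi$, and reading the $e_i=x$ component, together with $e_i^*\mathbf{f}(\mathbf{u})=f(x,x^*\mathbf{u},\mathbf{u})$ and $e_i^*A^*\mathbf{u}=\mathbf{u}^*Ax$, produces exactly
\[
 f(x,x^*\mathbf{u},\mathbf{u})=-\mathbf{u}^*Ax,\qquad x\in\X\setminus\Xi,
\]
with boundary values $u(x)=\phi(x)$ on $\Xi$ inherited from the representation. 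Equivalently, one may bypass the ODE and substitute the ansatz $Y_t=X_t^*\mathbf{u}$, $Z_t=\mathbf{u}$ directly into the equation, matching the finite-variation and martingale parts of $d(X_t^*\mathbf{u})=\mathbf{u}^*AX_{t-}\,dt+\mathbf{u}^*\,dM_t$ obtained from the decomposition $dX_t=AX_{t-}\,dt+dM_t$ to recover the same identity.
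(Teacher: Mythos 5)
Your proposal is correct and follows essentially the same route as the paper: establish time-homogeneity of the value function by a Markov-property/time-shift (restart) argument combined with uniqueness, then invoke Theorem~\ref{thm:Ymarkov} so that constancy of $\mathbf{u}_t$ kills the time derivative in the ODE, yielding $\mathbf{f}(\mathbf{u})+A^*\mathbf{u}=0$ off $\Xi$ and hence the stated algebraic equation, with $Z_t=\mathbf{u}$ constant. The only cosmetic difference is that the paper first replaces $A$ by the absorbing modification $\hat A$ (with $\hat A x=0$ on $\Xi$) before applying Theorem~\ref{thm:Ymarkov}, whereas you read the component equations off $\Xi$ directly; both devices serve the same purpose.
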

\begin{proof}
As $\tau$ is a hitting time of a Markov chain $X$, we know
\[\begin{split}
Y_t &= E\Big[\phi(X_\tau) - \int_{]t,\tau]} f(X_{u-}, Y_{u-}, Z_u)du\Big|\F_t\Big]\\
& = E\Big[\phi(X_\tau) - \int_{]t,\tau]} f(X_{u-}, Y_{u-}, Z_u)du\Big|X_t\Big].
\end{split}
\]
Defining a new Markov chain $\hat X_s = X_{s-t}$ for $s>t$, with hitting time $\hat\tau=\tau-t$, by the Markov property we have
\[Y_t = E\Big[\phi(\hat X_{\hat\tau}) - \int_{]0,{\hat\tau}]} f(\hat X_{u-}, Y_{u-}, Z_u)du\Big|\hat X_0\Big]\]
and as the right hand side does not depend on $t$, we see that $Y_t$ is a function purely of $\hat X_0=X_t$.

Let $\hat A$ be the modification of the rate matrix such that $\hat A x= A x$ for $x\notin\Xi$ and $\hat A x = 0$ for $x\in\Xi$. This agrees with $A$ on the set $t<\tau$, so from Theorem \ref{thm:Ymarkov}, we see that there exists a vector $\mathbf{u}$ such that $Y_t= X_{t-}^*\mathbf{u}$, $Z_t=\mathbf{u}$,  and
\[0= -(\mathbf{f}(\mathbf{u}) +\hat A^* \mathbf{u}) dt \qquad \text{for }t<\tau.\]
However this equation does not depend on $t$, and so 
\[\mathbf{f}(\mathbf{u}) +\hat A^* \mathbf{u} = 0.\]
Premultiplying by $x\in\X$ and using the relation between $A$ and $\hat A$ and the boundary conditions, we obtain the desired equation. Boundedness of $u$ is trivial from this representation, as we have assumed $f$, $\phi$ are bounded.
\end{proof}

\section{Exponential hitting time bounds}\label{hitting}
We now seek to show that when $\tau$ is the hitting time of a state of the chain, the required $\tau$-integrability assumptions of Theorem \ref{thm:STBSDE} are satisfied. This is done by examining the exponential ergodicity of the Markov chain under perturbations of the rate matrix.

A key result of \cite{Cohen2012} is that, when the driver $f$ is $\gamma$-balanced, does not depend on time or $Y$, and depends on $\omega$ only through $X_t(\omega)$, the measures $\bQ$ and $\bP$ established by Lemma \ref{lem:YisQmart} have closely related ergodic properties. In particular, if the underlying Markov chain is uniformly ergodic under $\bP$, then it is also uniformly ergodic under $\bQ$, and the rate of convergence to the Ergodic distribution can be uniformly bounded for every $\bQ$ in terms of $\gamma$ and the properties of $\bP$.

This result is of fundamental importance to our approach to BSDEs up to stopping times. Our first step is to generalise away from the Markovian assumptions on $f$, and to work only with the assumption that $f$ is $\gamma$-balanced. In this case, notions of `ergodicity' cease to have a clear meaning, as the dynamics of the Markov chain under $\bQ$ can be time-dependent. Instead, we work with a more fundamental property, that of the existence of exponential moments of the first hitting times of states of the Markov chain.

We begin with the definition of uniform ergodicity.
\begin{definition}\label{def:unifergodic}
Let $\cal M$ denote the set of probability measures on $\X$, with the topology inherited from considering them as a convex subset of $\ell_1(\X)$ (the total variation topology, with norm $\|f\|_{TV} = \sum_x |f(x)|$). We write $P^A_t\mu$ for the law of $X_t$ given $X_0\sim \mu$ when $X$ evolves following the rate matrix $A$.

 We say the Markov chain $X$ is \emph{uniformly ergodic} if there exists a measure $\pi$ on $\X$, and constants $R,\rho>0$, such that
\[\sup_{\mu\in{\cal M}} \|P^A_t\mu-\pi\|_{TV}  \leq Re^{-\rho t} \qquad\text{for all }t.\]
In this case $\pi$ is the unique invariant measure for the chain.
\end{definition}
The following lemma is simply a variant of \cite[Theorem 16.2.2(iv)]{Meyn2009} and is stated without proof, but shall be useful in our understanding of this property.

\begin{lemma}\label{ergodlem1}
 Let $X$ be a uniformly ergodic Markov chain, and let $x_C$ be an arbitrary state. Let $\tau_{C}$ be the first hitting time of $x_C$. Then for some $\beta>0$ (and hence for all $\beta$ sufficiently small), $\sup_{x\in\X}E[e^{\beta \tau_C}|X_0=x]<\infty.$
\end{lemma}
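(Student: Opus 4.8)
The plan is to convert the uniform ergodicity bound into a uniform-in-starting-state lower bound on the chance of hitting $x_C$ within a fixed finite window, and then to bootstrap this single-window estimate into a geometric tail bound for $\tau_C$ by repeatedly applying the Markov property. Finite exponential moments then follow by integrating the geometric tail against $e^{\beta t}$ and choosing $\beta$ small enough that the resulting geometric series converges, with the bound uniform in the initial state $x$.

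First I would fix the window. Projecting the total-variation bound onto the single coordinate $x_C$ and taking $\mu=\delta_x$ gives, for every state $x$,
\[\big|\bP(X_T = x_C \mid X_0 = x) - \pi(x_C)\big| \le \|P^A_T\delta_x - \pi\|_{TV} \le Re^{-\rho T}.\]
Provided $\pi(x_C)>0$, I may choose $T$ so large that $Re^{-\rho T}\le \tfrac12\pi(x_C)$, whence $\bP(X_T = x_C\mid X_0 = x)\ge \tfrac12\pi(x_C)=:\delta>0$ uniformly in $x$. Since $\{X_T = x_C\}\subseteq\{\tau_C\le T\}$, this yields $\bP(\tau_C> T\mid X_0 = x)\le 1-\delta$ for every $x$.

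Next I would iterate. Writing the survival event up to time $kT$ as the intersection of survival up to $(k-1)T$ with a fresh failure in the window $\,](k-1)T,kT]$, then conditioning on $\F_{(k-1)T}$ and applying the Markov property on the event $\{\tau_C>(k-1)T\}$ (where $X_{(k-1)T}\ne x_C$) gives $\bP(\tau_C > kT\mid X_0 = x)\le (1-\delta)\,\bP(\tau_C>(k-1)T\mid X_0 = x)$, and hence by induction $\bP(\tau_C > kT\mid X_0 = x)\le (1-\delta)^k$ for all $x$ and all $k\in\bN$. To pass to exponential moments I would bound
\[E[e^{\beta\tau_C}\mid X_0 = x] \le \sum_{k=0}^\infty e^{\beta(k+1)T}\,\bP(\tau_C>kT\mid X_0 = x) \le e^{\beta T}\sum_{k=0}^\infty\big(e^{\beta T}(1-\delta)\big)^k,\]
which is finite and independent of $x$ whenever $e^{\beta T}(1-\delta)<1$, i.e. for every $\beta<-T^{-1}\log(1-\delta)$. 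Taking the supremum over $x$ then gives the claim for all sufficiently small $\beta>0$.

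The one genuinely delicate point is the positivity $\pi(x_C)>0$ used to extract a strictly positive $\delta$ in the minorization step; everything else is routine. Under uniform ergodicity $\pi$ is the unique invariant law and is the common limit of $P^A_t\mu$ from every initial $\mu$, so any state charged with zero mass by $\pi$ is visited with vanishing frequency and cannot admit a finite exponential hitting moment from elsewhere --- in other words the statement forces (and uniform ergodicity supplies, for an irreducible chain) $\pi(x_C)>0$. I would therefore either record that $x_C$ is taken in the support of $\pi$, or invoke the fact that uniform ergodicity makes the whole reachable state space a single communicating positive-recurrent class, so that $\pi(x)>0$ for every state $x$; this is precisely the content borrowed from the Meyn--Tweedie machinery cited for the lemma.
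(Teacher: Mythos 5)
Your proposal is correct, but it is worth noting that the paper does not actually prove this lemma at all: it is stated without proof, as ``simply a variant of'' Meyn and Tweedie's Theorem 16.2.2(iv), so the paper's route is a citation into the $\psi$-irreducibility/petite-set machinery of that book. Your argument is instead a self-contained elementary proof: project the total-variation bound onto the coordinate $x_C$ to get a minorization $\bP(X_T=x_C\mid X_0=x)\geq \delta>0$ uniform in $x$, iterate over windows of length $T$ via the Markov property to obtain the geometric tail $\bP(\tau_C>kT\mid X_0=x)\leq(1-\delta)^k$, and integrate the tail to get $E[e^{\beta\tau_C}\mid X_0=x]<\infty$ for $\beta<-T^{-1}\log(1-\delta)$. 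The iteration and summation steps are sound (on $\{\tau_C>(k-1)T\}$ the conditional failure probability over the next window is bounded by $1-\delta$ regardless of $X_{(k-1)T}$, which is all that is needed). What your approach buys is transparency: it exhibits the admissible $\beta$ explicitly in terms of $R$, $\rho$ and $\pi(x_C)$, and it isolates the one genuine hypothesis hidden in the statement --- positivity of $\pi(x_C)$. You are right to flag this as the delicate point: under the paper's Definition \ref{def:unifergodic} alone (which imposes no irreducibility), the lemma is literally false for a state with $\pi(x_C)=0$, e.g.\ a transient state feeding an absorbing one, since then $\tau_C=\infty$ with positive probability from elsewhere. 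The Meyn--Tweedie formulation avoids this because irreducibility is built into their framework; your proof makes the needed assumption visible rather than implicit, at the modest cost of half a page where the paper spends one sentence.
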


We make the following basic assumption about our processes.

\begin{assumption}
 Under the measure $\bP$, the Markov chain $X$ has time-homogenous rate matrix $A$ and is uniformly ergodic.
\end{assumption}

\begin{theorem}\label{thm:hittingestimate}
Let $X,\bar X$ be two independent copies of the Markov chain on $\X$, and let these be uniformly ergodic under the measure generated by a rate matrix $A$. For some arbitrarily chosen state $\hat x$, let $\hat\tau=\inf\{t:X_t=\bar X_t = \hat x\}$, the first time $X$ and $\bar X$ meet in the state $\hat x$. Then for any $\epsilon>0$, there exists $\beta>0$ depending only on $\gamma$ and $A$ such that  
\[\sup_{B\succeq_\gamma A}\sup_{x,\bar x} E^B[e^{\beta \hat\tau}|X_0=x, \bar X_0=\bar x]\leq 1+\epsilon.\]
\end{theorem}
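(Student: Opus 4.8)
The plan is to lift the two chains to the single product chain $(X,\bar X)$ on $\X\times\X$, to show this product chain is uniformly ergodic with constants that do not depend on the particular perturbation $B$, and then to turn that uniform ergodicity into a geometric tail bound for $\hat\tau$ (the hitting time of the single product state $(\hat x,\hat x)$), from which the exponential moment is read off directly. Throughout, under $E^B$ each copy evolves independently with rate matrix $B$, so $(X,\bar X)$ is Markov on $\X\times\X$ with transition kernel $P^B_t\big((x,\bar x),\cdot\big)=P^B_t(x,\cdot)\otimes P^B_t(\bar x,\cdot)$ and invariant law $\pi_B\otimes\pi_B$, where $\pi_B$ is the invariant law of a single $B$-chain. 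I write $\bP^B_{x,\bar x}$ for the joint law with $X_0=x$, $\bar X_0=\bar x$.

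First I would record the uniform ergodicity input. The key result of \cite{Cohen2012} states that, because $X$ is uniformly ergodic under $A$, every admissible perturbation is again uniformly ergodic with a convergence rate bounded in terms of $\gamma$ and $A$ only; that is, there are $R,\rho>0$, depending only on $\gamma$ and $A$, with $\sup_\mu\|P^B_t\mu-\pi_B\|_{TV}\le Re^{-\rho t}$ for every $B$ in the family and every $t$. Since for product measures $\|\mu_1\otimes\mu_2-\nu_1\otimes\nu_2\|_{TV}\le\|\mu_1-\nu_1\|_{TV}+\|\mu_2-\nu_2\|_{TV}$, and a general initial law on $\X\times\X$ is a convex combination of the product point masses $\delta_{(x,\bar x)}=\delta_x\otimes\delta_{\bar x}$, convexity of the total variation distance propagates this estimate to the product chain: $\sup_\nu\|P^B_t\nu-\pi_B\otimes\pi_B\|_{TV}\le 2Re^{-\rho t}$, again with constants independent of $B$.

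Next I would produce a uniform one-step minorisation at $(\hat x,\hat x)$. For any $x$ we have $|P^B_t(x,\hat x)-\pi_B(\hat x)|\le Re^{-\rho t}$, so choosing a deterministic $T_0$ with $Re^{-\rho T_0}\le\tfrac12\inf_B\pi_B(\hat x)$ gives $P^B_{T_0}(x,\hat x)\ge\tfrac12\pi_B(\hat x)$ for all $x$ and all $B$. Hence
\[
\inf_{x,\bar x}\bP^B_{x,\bar x}\big((X_{T_0},\bar X_{T_0})=(\hat x,\hat x)\big)=\inf_{x,\bar x}P^B_{T_0}(x,\hat x)\,P^B_{T_0}(\bar x,\hat x)\ge\tfrac14\big(\inf_B\pi_B(\hat x)\big)^2=:q>0.
\]
Applying the Markov property at the deterministic times $T_0,2T_0,\dots$ and using that this minorisation is uniform in the starting pair yields the geometric tail $\bP^B_{x,\bar x}(\hat\tau>mT_0)\le(1-q)^m$ for every $m$, every $B$ and every starting pair. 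Finally, writing $E^B[e^{\beta\hat\tau}]=1+\beta\int_0^\infty e^{\beta t}\,\bP^B(\hat\tau>t)\,dt$ and splitting the integral over the intervals $[mT_0,(m+1)T_0)$, the geometric tail gives, for $\beta<T_0^{-1}\log\tfrac1{1-q}$,
\[
E^B[e^{\beta\hat\tau}]\le 1+\frac{e^{\beta T_0}-1}{1-(1-q)e^{\beta T_0}}.
\]
The right-hand side tends to $1$ as $\beta\downarrow0$ and depends on $B$ only through $q$ and $T_0$, which depend only on $\gamma$ and $A$; choosing $\beta$ small enough that it is at most $1+\epsilon$ completes the argument.

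The step I expect to be the main obstacle is obtaining constants that are genuinely uniform over the whole (noncompact) family of perturbations, in particular a uniform lower bound $\inf_B\pi_B(\hat x)>0$ on the invariant mass at $\hat x$, and with it the minorisation constant $q$. The uniform convergence rate from \cite{Cohen2012} controls the distance to $\pi_B$ but not $\pi_B(\hat x)$ itself. To pin the latter down I would use that the admissible $B$ have jump rates comparable to those of $A$ up to the factors $\gamma,\gamma^{-1}$, so that the exit rate from $\hat x$ is uniformly bounded above, while, by Lemma \ref{ergodlem1} applied uniformly, the mean return time to $\hat x$ is uniformly bounded; the renewal identity $\pi_B(\hat x)=\big(q_B(\hat x)\,E^B_{\hat x}[\text{return time}]\big)^{-1}$ then bounds $\pi_B(\hat x)$ away from $0$ uniformly. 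Verifying that these bounds hold simultaneously and uniformly across the perturbation family, rather than merely being finite for each fixed $B$, is the crux; indeed the two-sided comparability of rates is essential here, since a purely one-sided control allowing arbitrarily fast exit from $\hat x$ would drive $\pi_B(\hat x)$, and hence $q$, to zero.
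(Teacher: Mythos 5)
The paper gives no argument for this theorem beyond citing \cite[Lemma 11, Corollary 12]{Cohen2012} (with a remark about which stopping time is meant there), so your self-contained coupling argument is a genuinely different route, and its architecture --- product chain, uniform minorisation at $(\hat x,\hat x)$, geometric tail, exponential moment --- is the right one. But two things stop it from being a proof. The first is the hypothesis mismatch you flag in your last paragraph: it is not a side worry, it is decisive, because for the one-sided family $B\succeq_\gamma A$ actually written in the statement the result is \emph{false}, so no argument can close the gap you leave. Concretely, take $\X=\{\hat x,x_2\}$, $A=\begin{pmatrix}-1&1\\1&-1\end{pmatrix}$, and let $B_n$ multiply the exit rate from $\hat x$ by $n$; then $B_n-\tfrac12 A$ is a rate matrix with diagonal entries at most $-\tfrac12$, so $B_n\succeq_{1/2}A$ for all $n$, yet starting both chains at $x_2$ each visit of the pair to $(\hat x,x_2)$ or $(x_2,\hat x)$ reaches $(\hat x,\hat x)$ only with probability $1/(n+1)$, so $E^{B_n}[\hat\tau]\sim n/2$ and $\sup_n E^{B_n}[e^{\beta\hat\tau}]=\infty$ for every $\beta>0$. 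You should therefore say explicitly that you are proving the theorem for the two-sided family $B\sim_\gamma A$ --- which is evidently what is intended, being the only form the paper ever uses (proof of Theorem \ref{thm:exphtbounds}) --- rather than attempting the literal statement.

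The second gap is the one you yourself call the crux, and your sketch for it is circular. The uniform rate $(R,\rho)$ bounds $\|P^B_t\mu-\pi_B\|_{TV}$ but gives no lower bound on $\pi_B(\hat x)$; and ``Lemma \ref{ergodlem1} applied uniformly'' begs the question, since the constants in that lemma depend on the chain exactly through the quantity at issue --- a uniform-in-$B$ bound on return times to the fixed state $\hat x$ is essentially the theorem being proved, with first moments in place of exponential ones. There is also an external circularity risk: the uniform-in-$B$ ergodicity you import from \cite{Cohen2012} is, in that paper, itself a consequence of the meeting-time estimate that the present theorem re-expresses. Both problems disappear if you extract the minorisation directly from the ergodicity of $A$ alone. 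For $B\sim_\gamma A$ the jump rates of $B$ and $A$ have the same support and satisfy $\gamma A_{ij}\leq B_{ij}\leq\gamma^{-1}A_{ij}$, so on any window $[0,T_1]$ the law of the $B$-chain is equivalent to that of the $A$-chain with density bounded below by $\gamma^{K}e^{-\gamma^{-1}aT_1}$ on the event $\{N_{T_1}\leq K\}$ of at most $K$ jumps, where $a=\sup_i|A_{ii}|$. Choosing $T_1$ so that $\inf_x P^A_{T_1}(x,\hat x)\geq\pi_A(\hat x)/2$ (uniform ergodicity of $A$) and then $K$ so that $\bP(N_{T_1}>K)\leq\pi_A(\hat x)/4$ under $A$ (the jump count is dominated by a Poisson of rate $a$), we get
\[\inf_{B\sim_\gamma A}\inf_{x} P^B_{T_1}(x,\hat x)\geq \gamma^{K}e^{-\gamma^{-1}aT_1}\,\pi_A(\hat x)/4=:\delta>0,\]
with $\delta$ depending only on $\gamma$ and $A$. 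Squaring gives your product-chain minorisation with $q=\delta^2$, no reference to $\pi_B$ is needed, and the rest of your argument (the geometric tail and the bound $E^B[e^{\beta\hat\tau}]\leq 1+(e^{\beta T_1}-1)/(1-(1-q)e^{\beta T_1})\to 1$ as $\beta\downarrow 0$) then goes through verbatim and is fully self-contained.
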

\begin{proof}
This is a re-expression of \cite[Lemma 11, Corollary 12]{Cohen2012}, where we note that the $T$ referred to here is not as stated in \cite{Cohen2012}, but is rather the more restrictive stopping time used in the proof of \cite[Lemma 11]{Cohen2012}.
\end{proof}

We can extend this result to give the following theorem.

\begin{theorem}\label{thm:exphtbounds}
For some arbitrarily chosen state $\hat x$, let $\hat\tau=\inf\{t:X_t= \hat x\}$. Then for any $\epsilon>0$ there exists a $\beta>0$ depending only on $A$ and $\gamma$ such that for any measure $\bQ\in\mathcal{Q}_\gamma$,
\[E^\bQ[e^{\beta (\hat\tau-t)^+}|\mathcal{F}_t]\leq (1+\epsilon)\]
for any $t$.
\end{theorem}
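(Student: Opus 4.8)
The plan is to reduce the bound for a single chain to the two-chain coupling result of Theorem \ref{thm:hittingestimate}, and to handle the two distinct sources of generality in $\mathcal{Q}_\gamma$—the time-dependence of the compensator and the lack of time-homogeneity—by exploiting the conditional (Markov-type) structure of the family $\mathcal{Q}_\gamma$ at the stopping time.

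The first step is to dispose of the time-shift and conditioning. By the remark following Definition \ref{def:QfamDefn}, the family $\mathcal{Q}_\gamma$ is stable under pasting measures at a stopping time, and $\bP\in\mathcal{Q}_\gamma$. Thus I would argue that the conditional law under any $\bQ\in\mathcal{Q}_\gamma$ of the post-$t$ evolution, given $\F_t$, is again (the restriction of) a measure in $\mathcal{Q}_\gamma$ applied to the shifted chain $\hat X_s := X_{t+s}$. On the event $\{\hat\tau > t\}$ the quantity $(\hat\tau-t)^+ = \hat\tau - t$ is precisely the first hitting time of $\hat x$ by the shifted chain, and on $\{\hat\tau\le t\}$ the exponent is zero so the bound $1+\epsilon$ is trivially satisfied. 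Hence it suffices to bound $E^\bQ[e^{\beta\sigma}\mid X_0 = x]$ uniformly over $x$ and over all $\bQ\in\mathcal{Q}_\gamma$, where $\sigma$ is the first hitting time of $\hat x$ by a single chain under $\bQ$.

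The second step is to relate a \emph{single} chain under a measure in $\mathcal{Q}_\gamma$ to the \emph{two}-chain meeting time $\hat\tau$ of Theorem \ref{thm:hittingestimate}. The key observation is that a measure $\bQ\in\mathcal{Q}_\gamma$ need not be time-homogeneous, so I cannot invoke the ergodic estimate directly. Instead I would use a coupling argument: run the $\bQ$-chain alongside an independent stationary copy $\bar X$ evolving under a rate matrix $B\succeq_\gamma A$, and note that the first time the $\bQ$-chain hits $\hat x$ is dominated by the first time the \emph{pair} meets in state $\hat x$. By Definition \ref{def:QfamDefn}, the pointwise jump rates of any $\bQ\in\mathcal{Q}_\gamma$ lie in $[\gamma,\gamma^{-1}]$ relative to those of $A$, which is exactly the condition $B\succeq_\gamma A$ required in Theorem \ref{thm:hittingestimate}. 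Thus Theorem \ref{thm:hittingestimate} supplies, for any $\epsilon>0$, a $\beta>0$ depending only on $A$ and $\gamma$ with $\sup_{B\succeq_\gamma A}\sup_{x,\bar x} E^B[e^{\beta\hat\tau}\mid X_0=x,\bar X_0=\bar x]\le 1+\epsilon$. Since $\sigma\le\hat\tau$ pathwise and $\beta>0$, monotonicity of $e^{\beta\cdot}$ gives $E^\bQ[e^{\beta\sigma}\mid X_0=x]\le 1+\epsilon$ uniformly.

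The step I expect to be the main obstacle is making the single-chain-to-two-chain domination precise in the \emph{time-inhomogeneous} setting, since Theorem \ref{thm:hittingestimate} is phrased for the time-homogeneous perturbations $B\succeq_\gamma A$, whereas a general $\bQ\in\mathcal{Q}_\gamma$ has a predictable, possibly time-varying compensator $\lambda(\omega,t)$. The resolution is that the coupling bound in Theorem \ref{thm:hittingestimate} (and in the underlying \cite[Lemma 11, Corollary 12]{Cohen2012}) is genuinely uniform over the pointwise rate ratios, and the hitting-time estimate for a time-inhomogeneous chain whose instantaneous rates always lie in the admissible cone is controlled by the worst-case homogeneous chain in that cone; one makes this rigorous by a stochastic-domination comparison of the meeting times, which is exactly where the $[\gamma,\gamma^{-1}]$ rate constraint is used. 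Once uniformity over the inhomogeneous family is secured, the conclusion follows by combining the two steps and taking the conditional expectation.
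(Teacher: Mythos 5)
Your step 1 (shifting and pasting within $\mathcal{Q}_\gamma$) is fine in itself, but it only relocates the problem: after the reduction you still must bound $E^{\bQ}[e^{\beta\sigma}\mid X_0=x]$ uniformly over all \emph{inhomogeneous} $\bQ\in\mathcal{Q}_\gamma$, so the entire content lies in your step 2, and there is a genuine gap there. You assert that the hitting time of a chain whose predictable, $(\omega,t)$-dependent compensator has rates pointwise in the $[\gamma,\gamma^{-1}]$-cone around $A$ is ``controlled by the worst-case homogeneous chain in that cone,'' to be made rigorous ``by a stochastic-domination comparison of the meeting times.'' This is not an argument; it is a restatement of the theorem. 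A measure $\bQ\in\mathcal{Q}_\gamma$ corresponds to an adapted, path-dependent choice of rates, i.e.\ to a control problem, and there is no pathwise or stochastic-ordering sense in which a single fixed matrix in the cone dominates all such adapted choices: the $\bQ$-chain may switch rate matrices according to its history, and hitting times are not monotone functionals of the rates (speeding up jumps toward states away from $\hat x$ delays hitting). Relatedly, your claim that the constraint in Definition \ref{def:QfamDefn} ``is exactly the condition $B\succeq_\gamma A$ required in Theorem \ref{thm:hittingestimate}'' conflates pointwise constraints on a time-varying compensator with membership of a \emph{homogeneous} matrix in the cone; Theorem \ref{thm:hittingestimate} simply does not apply to the pair (inhomogeneous $\bQ$-chain, homogeneous $\bar X$), which is the coupling your sketch requires. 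What is true is a value-function bound, $\sup_{\bQ\in\mathcal{Q}_\gamma}E^{\bQ}[\cdot]\leq$ (value of a control problem over the cone), and that requires a dynamic-programming or verification argument that your proposal does not supply.

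The paper closes exactly this gap with the BSDE comparison theorem, which is the tool missing from your sketch. For fixed $T$, the quantity $E^{\bQ}[e^{\beta(\hat\tau\wedge T)}\mid\F_t]$ is the solution $Y^f_t$ of the BSDE with terminal value $e^{\beta(\hat\tau\wedge T)}$ and linear driver $f(\omega,t,z)=z^*(\lambda_t-AX_{t-})$, where $\lambda$ generates $\bQ$. The driver $g_\gamma(x,z)=\sup_{B\sim_\gamma A}z^*(B-A)x$ dominates $f$, is $\gamma$-balanced by Lemma \ref{lem:gambalancedconvex} and Remark \ref{rem:measchangebalanced}, and is Lipschitz by Lemma \ref{lem:balislip}; hence the finite-time comparison theorem (Theorem \ref{thm:finitecompthm}) yields $E^{\bQ}[e^{\beta(\hat\tau\wedge T)}\mid\F_t]\leq Y^g_t=\sup_{B\sim_\gamma A}E^{B}[e^{\beta(\hat\tau\wedge T)}\mid\F_t]$, and monotone convergence as $T\to\infty$ reduces the problem to time-homogeneous matrices $B$. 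Only at that point do your remaining ingredients (the Markov property, the domination of the single-chain hitting time by the pair meeting time, and Theorem \ref{thm:hittingestimate}) become applicable and finish the proof. So while the skeleton of your plan (reduce to Theorem \ref{thm:hittingestimate}, condition at time $t$, worry about inhomogeneity) matches the paper's, the decisive step --- passing from an arbitrary $\bQ\in\mathcal{Q}_\gamma$ to homogeneous matrices --- must go through the comparison theorem for $\gamma$-balanced drivers, not through a coupling or stochastic-domination argument.
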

\begin{proof}
Define the BSDE driver 
\[g_\gamma(x,z):= \sup_{B:B\sim_\gamma A} (z^*(B-A)x).\]
By Lemma \ref{lem:gambalancedconvex} and Remark \ref{rem:measchangebalanced}, we can see that $g_\gamma$ is $\gamma$-balanced. By Lemma \ref{lem:balislip}, $g_\gamma$ is Lipschitz with respect to the $\|\cdot\|_M$ seminorm. 

Let $f(\omega, t, z)=z^*(\lambda_t -A X_{t-})$, where $\lambda$ is a vector process generating $\bQ\in\mathcal{Q}_\gamma$, in the sense of Definition \ref{def:QfamDefn}. For any $T>0$ we can consider the BSDEs with terminal value $e^{\beta (\hat\tau \wedge T)}$ at time $T$, and drivers $g_\gamma$ and $f$. Let $Y^g$ and $Y^{f}$ denote the corresponding solutions, for which direct calculation shows that
\[\begin{split}
 Y^g_t&= \sup_{B\sim_\gamma A} E^{B}[e^{\beta (\hat\tau \wedge T)}|\F_t]\\
   Y^{f}_t &= E^{\bQ}[e^{\beta (\hat\tau \wedge T)}|\F_t]
  \end{split}\]
We can see that $f$ is $\gamma$-balanced, and furthermore, that $g_\gamma(X_{t-},z)\geq f(\omega,t, z)$. By the finite-time comparison theorem (Theorem \ref{thm:finitecompthm}) we see that $Y^g_t\geq Y^{f}_t$ for all $t$. Hence
\[\sup_{B\sim_\gamma A} E^{B}[e^{\beta (\hat\tau \wedge T)}|\F_t]\geq E^{\bQ}[e^{\beta (\hat\tau \wedge T)}|\F_t]\]
and taking $T\to\infty$, as $\hat\tau$ is almost surely finite, by the monotone convergence theorem we see
\[E^{\bQ}[e^{\beta \hat\tau}|\F_t]\leq \sup_{B\sim_\gamma A} E^{B}[e^{\beta \hat\tau}|\F_t].\]
Now under each rate matrix $B$, 
\[E^{B}[e^{\beta \hat\tau}|\F_t] =  E^{B}[e^{\beta (t\wedge \hat\tau+(\hat\tau-t)^+)}|\F_t] \leq e^{\beta (t\wedge \hat\tau)} E^{B}[e^{\beta (\hat\tau-t)^+}|\F_t]
\]
For every $x\in\X$, by the Markov property and Theorem \ref{thm:hittingestimate}, 
\[E^{B}[e^{\beta (\hat\tau-t)^+}|X_{t}=x] = E^{B}[e^{\beta \hat\tau}|X_{0}=x] \leq 1+\epsilon.\]
Hence
\[E^{\bQ}[e^{\beta \hat\tau}|\F_t]=E^{\bQ}[e^{\beta \hat\tau}|X_t] \leq e^{\beta (t\wedge \hat\tau)}(1+\epsilon)\]
and rearrangement yields the result.
\end{proof}
\begin{corollary}\label{cor:hitmoments}
 Let $\hat\tau$ be the first hitting time of a state $\hat x$. Then for any $\beta\geq 0$, any $\bQ\in\mathcal{Q}_\gamma$, there exists a constant $k$ such that
\[E^\bQ[(1+\hat\tau)^{1+\beta}|\F_t] \leq k(1+t)^{1+\beta}\]
\end{corollary}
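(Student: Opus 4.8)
The plan is to derive the polynomial moment bound directly from the exponential moment bound of Theorem~\ref{thm:exphtbounds}, which already provides uniformity over $\bQ\in\mathcal{Q}_\gamma$ and over all $t$. The only additional ingredients are an elementary pathwise comparison that replaces $\hat\tau$ by $t$ plus the truncated tail $(\hat\tau-t)^+$, and the standard fact that any polynomial is dominated by an exponential. Neither of these involves the measure $\bQ$, so the uniformity is inherited wholesale from Theorem~\ref{thm:exphtbounds}.

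First I would record the pathwise inequality that for every fixed $t\geq 0$,
\[
1+\hat\tau \;\leq\; (1+t)\bigl(1+(\hat\tau-t)^+\bigr),
\]
which follows because $\hat\tau \leq t + (\hat\tau-t)^+$ always holds, and $1+t+s\leq (1+t)(1+s)$ for every $s\geq 0$ (the difference being $ts\geq 0$). Raising to the power $1+\beta$ and taking the conditional expectation $E^\bQ[\,\cdot\,|\F_t]$ then gives
\[
E^\bQ\bigl[(1+\hat\tau)^{1+\beta}\big|\F_t\bigr]
\;\leq\;
(1+t)^{1+\beta}\,E^\bQ\bigl[(1+(\hat\tau-t)^+)^{1+\beta}\big|\F_t\bigr],
\]
so it remains only to bound the last conditional expectation by a constant that is uniform in $\bQ$ and in $t$.

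To do this I would invoke Theorem~\ref{thm:exphtbounds} with, say, $\epsilon=1$, obtaining a rate $\beta'>0$ depending only on $A$ and $\gamma$ with $E^\bQ[e^{\beta'(\hat\tau-t)^+}|\F_t]\leq 2$ for all $\bQ\in\mathcal{Q}_\gamma$ and all $t$. Since the map $s\mapsto (1+s)^{1+\beta}e^{-\beta' s}$ is continuous and tends to $0$ as $s\to\infty$, it is bounded on $[0,\infty[$ by some finite constant $C=C(\beta,\beta')$; hence pathwise $(1+(\hat\tau-t)^+)^{1+\beta}\leq C\,e^{\beta'(\hat\tau-t)^+}$. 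Taking conditional expectations yields $E^\bQ[(1+(\hat\tau-t)^+)^{1+\beta}|\F_t]\leq 2C$, and combining with the display above gives the claim with $k:=2C$, a constant depending only on $\beta$, $A$ and $\gamma$.

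I do not expect a genuine analytic obstacle here, since the substantive work is already carried out in Theorem~\ref{thm:exphtbounds}. The one point requiring care is that the exponential rate $\beta'$ furnished by that theorem bears no relation to the target polynomial exponent $\beta$; this is harmless, as we use only $\beta'>0$ and let the domination constant $C$ absorb the gap. The essential feature to emphasise is that $\beta'$, and the bound $1+\epsilon$, are uniform over the whole family $\mathcal{Q}_\gamma$ and over all $t$, which is precisely what makes the resulting constant $k$ independent of the particular $\bQ$ and of the conditioning time.
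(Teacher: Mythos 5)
Your proposal is correct and follows essentially the same route as the paper's own proof: both combine the pathwise inequality $(1+\hat\tau)^{1+\beta}\leq (1+t)^{1+\beta}(1+(\hat\tau-t)^+)^{1+\beta}$ with the uniform exponential moment bound of Theorem~\ref{thm:exphtbounds} to control $E^\bQ[(1+(\hat\tau-t)^+)^{1+\beta}|\F_t]$ by a constant. Your write-up merely makes explicit the polynomial-dominated-by-exponential step that the paper leaves implicit.
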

\begin{proof}
From Theorem \ref{thm:exphtbounds} we know that for all $\gamma$ sufficiently small, $E^\bQ[e^{\gamma (\hat\tau-t)^+}|\mathcal{F}_t]\leq (1+\epsilon)$. Hence, we can see that  for any $\beta>0$ there exists some constant $k$ such that
 \[E^\bQ[(1+(\hat\tau-t)^+)^{1+\beta}|\mathcal{F}_t]\leq k.\]
As $t\geq0$, we have the bound
\[\begin{split}(1+\hat\tau)^{1+\beta}&\leq (1+t)^{1+\beta}\Big(\frac{1+t+(\hat\tau-t)^+}{1+t}\Big)^{1+\beta}\\
%&\leq (1+t)^{1+\beta}\Big(1+(\hat\tau-t)^+ + \frac{t}{1+t}\Big)^{1+\beta}\\
&\leq (1+t)^{1+\beta}(1+(\hat\tau-t)^+)^{1+\beta}.
\end{split}\]
The result follows.
\end{proof}

\section{Applications}\label{applic}
We now present some novel applications of these methods. We begin with the archetypal Markovian control problem. A related approach for ergodic control problems was considered in \cite{Cohen2012}. A general setting for control of a Marked Point Process to deterministic times is given in \cite{Confortola2012}.

\subsection{Control to a stopping time}
Consider the problem of minimizing a cost
\[Y_t =  \essinf_{u} E^{u}\Big[\int_{]t,\tau]} L(s, Y_{s-}, X_{s-}, u_s) ds + \phi(\tau, X_{\tau})\Big|\F_t\Big]\]
where
\begin{itemize}
 \item $U$ is a space of controls, which is a separable metric space,
 \item $u$ is a $U$ valued predictable process,
 \item $\phi$ is a terminal cost function with $\phi(t, x)\leq c(1+t^\beta)$ for some $\beta>0$,
 \item $L:\bR^+\times \bR\times\X\times U\to\bR$ is a measurable cost function, with 
\[|L(t, y, x, u)| \leq c(1+t^{\hat\beta}),\qquad  (L(t, y, x, u)-L(t, y', x, u))/(y-y') \in [-c,0]\]
for some $c\in\bR^+$, some  $\hat\beta<\beta$,
 \item $E^{u}$ is the expectation under which at time $t$, for the path $\omega$, $X$ jumps from state $e_i$ to state $e_j$ at a rate $e_j^* A^{u_t(\omega)}e_i$, for some measurable matrix valued function $A^{(\cdot)}:U\to\text{rate matrices}$,
 \item for some $\gamma>0$, for all $u \in U$, the matrices $A^{u}\sim_\gamma A$, for some reference rate matrix $A$ under which $X$ is a uniformly ergodic Markov chain.
 \item $\tau$ is the first hitting time of a collection of points in $\X$.
\end{itemize}
We shall write $E$ for the expectation under which $X$ has rate matrix $A$.

We define the Hamiltonian
\begin{equation}\label{eq:hamiltonian}
 f(x, t,y, z) = \inf_{\upsilon\in U}\{L(t, y, x, \upsilon) + z^* (A^{\upsilon}-A)x\}.
\end{equation}
As $A^u\sim_\gamma A$, by Lemma \ref{lem:gambalancedconvex} we see that $f$ is $\gamma$-balanced and the requirements of Theorem \ref{thm:STBSDE} are satisfied. Therefore, the BSDE (\ref{eq:STBSDE}) with driver $f$ admits a unique solution with bounded growth. By Theorem \ref{thm:Ymarkov}, the solution to the BSDE is Markovian, that is, $Y_t = u(t, X_t)$ and $e_i^*Z_t = u(t, e_i)$, for some function $u$.

If the infimum in (\ref{eq:hamiltonian}) is attained, then there exists (assuming the continuum hypothesis, by McShane and Warfield \cite{McShane1967}) a measurable function $\kappa:\X\times \bR^+\times\bR\times\bR^N\to U$ such that
\[f(x,t,y,z) = L(t,y,x,\kappa(x, t,y,z)) + z^*(A^{\kappa(x, t,y,z)}-A)x.\]
 We then have the following theorem.

\begin{theorem}\label{thm:optimalcontrol}
 In the setting described above, let $(Y, Z)$ be the solution to the BSDE (\ref{eq:STBSDE}) with driver $f$. Then the following hold.
\begin{enumerate}[(i)]
 \item For an arbitrary control $u$, if
\[Y_t^u =  E^{u}_x\Big[\int_{]t,\tau]} L(s, Y_{s-}^u, X_{s-}, u_s) ds + \phi(\tau, X_{\tau})\Big|\F_t\Big]\]
then $Y_t^u\geq Y_t$, and equality holds if and only if
\[L(t, Y_{t-}, X_{t-}, u_t) + (Z_t)^* (A^{U_t}-A)X_{t-}= f(X_{t-},t, Y_{t-}, Z_t)\qquad d\bP\times dt-a.e.\]
\item If the infimum is attained in (\ref{eq:hamiltonian}), then the control $u_t^* = \kappa(X_{t-}, t, Y_{t-}, Z_t)$ verifies $Y^{u^*}_t= Y_t$, and is an optimal feedback control.
\end{enumerate}
\end{theorem}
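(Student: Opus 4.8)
The plan is to identify each admissible cost $Y^u$ with the solution of a BSDE under the reference measure $\bP$ whose driver is the Hamiltonian integrand with the control frozen, and then to read off both claims from the comparison theorem (Theorem \ref{thm:compthm}) together with the uniqueness half of Theorem \ref{thm:STBSDE}.

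First I would fix a predictable $U$-valued control $u$ and convert the defining conditional expectation of $Y^u$, taken under $E^u$ (under which $X$ has rate matrix $A^u$), into a BSDE under $\bP$ (under which $X$ has rate matrix $A$). Comparing the two semimartingale decompositions of $X$ gives $dM^u_t = dM_t + (A-A^{u_t})X_{t-}\,dt$ for the $E^u$-martingale part $M^u$; inserting this into the martingale-representation dynamics of $Y^u$ under $E^u$ turns the $A^u$-drift into an additional Lebesgue term and shows that $Y^u$ solves the $\bP$-BSDE with terminal value $\phi(\tau,X_\tau)$ and driver
\[f^u(x,t,y,z) := L(t,y,x,u) + z^*(A^u-A)x\]
(with the control process substituted). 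Because $A^{u}\sim_\gamma A$ uniformly in $u$, Remark \ref{rem:measchangebalanced} shows $f^u$ is $\gamma$-balanced with $\lambda^{z,z'}_t = A^{u_t}X_{t-}$; it is predictable, inherits the decreasing monotonicity in $y$ and the bound $|f^u(x,t,0,0)|=|L(t,0,x,u)|\leq c(1+t^{\hat\beta})$ from the assumptions on $L$, and the terminal data and stopping time are exactly those of the Hamiltonian BSDE. Hence $f^u$ meets the hypotheses of Theorem \ref{thm:STBSDE}, so $Y^u$ is its unique solution.

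For part (i), the definition of the Hamiltonian as an infimum yields $f(x,t,y,z)\leq f^u(x,t,y,z)$ pointwise, and both BSDEs share the terminal value $\phi(\tau,X_\tau)$, so the comparison theorem gives $Y_t\leq Y^u_t$. For the equality clause I would invoke the iff part of Theorem \ref{thm:compthm}, which (with $f^u$ the larger driver) reduces the identity $Y^u=Y$ to the a.e.\ agreement of the drivers along the solutions. In that regime $Y^u=Y$ forces the martingale parts to coincide, whence $Z^u=Z$ up to $\sim_M$; since $f^u$ and $f$ are Lipschitz in the $\|\cdot\|_{M_t}$ seminorm (Lemma \ref{lem:balislip}) they are insensitive to this $\sim_M$ ambiguity, and the characterisation collapses to
\[L(s,Y_{s-},X_{s-},u_s) + Z_s^*(A^{u_s}-A)X_{s-} = f(X_{s-},s,Y_{s-},Z_s)\qquad d\bP\times dt\text{-a.e.},\]
which is the stated optimality condition. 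For part (ii), when the infimum in (\ref{eq:hamiltonian}) is attained I would take $u^*_t=\kappa(X_{t-},t,Y_{t-},Z_t)$, so that by construction of $\kappa$ one has $f^{u^*}(X_{t-},t,Y_{t-},Z_t)=f(X_{t-},t,Y_{t-},Z_t)$ identically; then $(Y,Z)$ itself solves the BSDE with driver $f^{u^*}$ and terminal value $\phi(\tau,X_\tau)$, and since $f^{u^*}$ satisfies Theorem \ref{thm:STBSDE}, uniqueness forces $(Y,Z)=(Y^{u^*},Z^{u^*})$, i.e.\ $Y^{u^*}=Y$. Combined with $Y^u\geq Y$ for every control from part (i), this exhibits $u^*$ as an optimal feedback control.

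I expect the main obstacle to be the very first step, the rigorous passage from the $E^u$-representation of $Y^u$ to the $\bP$-BSDE with driver $f^u$. Over the infinite horizon the controlled measures need not be equivalent to $\bP$, so the compensator/Girsanov computation should be carried out on the finite horizons $]t,\tau\wedge T]$, where Lemma \ref{lem:QtauequivPtau} secures equivalence on $\F_{\tau\wedge T}$, and then passed to the limit using the uniform integrability afforded by Corollary \ref{cor:hitmoments}. A secondary point requiring care is confirming that, with the predictable control process substituted, $f^u$ remains \emph{uniformly} $\gamma$-balanced, so that a single invocation of the comparison theorem covers all controls at once.
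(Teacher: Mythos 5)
Your proposal is correct and follows essentially the same route as the paper: both identify $Y^u$ with the solution of the $\bP$-BSDE whose driver is the control-frozen integrand $f^u(x,t,y,z)=L(t,y,x,u)+z^*(A^u-A)x \geq f(x,t,y,z)$, then obtain part (i) and its equality clause from the comparison theorem (Theorem \ref{thm:compthm}), and part (ii) from the attainment of the infimum together with comparison/uniqueness. The paper's proof is simply far terser, leaving implicit the measure-change identification of $Y^u$ and the verification that $f^u$ satisfies the hypotheses of Theorem \ref{thm:STBSDE}, which you spell out correctly.
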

\begin{proof}
We see that $Y_\tau = Y^u_\tau = \phi(\tau, X_\tau)$. By definition of the Hamiltonian, $f(x, t,y, z) \leq L(t, y, x, U_t) + z^* (A^U-A)x$, and so by the Comparison theorem (Theorem \ref{thm:compthm}), we see $Y_t^u \geq Y_t$ for every $t$, and the desired condition for equality holds. Statement (ii) also holds by an application of the Comparison theorem.
\end{proof}

\subsection{Stochastic shortest paths}
Let $\G$ be a directed graph, represented by nodes in $\X$. Let the distance from node $e_i$ to $e_j$ be given by $e_j^* D e_i$ for some matrix $D$. Let $A$ be the transition matrix of a continuous time random walk on this graph, so that $e_j^* A e_i = \frac{1/(e_i^*De_j)}{\sum_j 1/(e_i^*De_j)}$ for $i\neq j$, and $e_i^* A e_i = -\sum_{j\neq i} e_j^* A e_i$. The time to reach a node $x$ starting from a node $x'$ is then given by $E[\tau|X_0=x']$, where $\tau$ is the first hitting time of $x$. 

Now suppose that it is possible to choose to `walk faster' on those paths which are heading in the right direction. This could be modelled by a change of measure, where the transition matrix $A$ is replaced by a matrix $A^u$, where $u$ is some control. Provided $A^u\sim_\gamma A$ for some $\gamma$, as shown in the previous section, the expected time to hitting $x$ under the optimal control is then the solution of the BSDE to $\tau$ with driver $f(x, z) = \inf_{u}\{z^*(A^u-A)x\}$ and terminal value $\xi=\tau$.

Alternatively, we can consider the expected \emph{remaining} time to hitting $x$ under the optimal control as the solution of the BSDE to $\tau$ with driver $f'(x,z) = \inf_u\{z^*(A^u-A)x\}+1$, and terminal value $\xi'=0$. If the earlier BSDE has solution $(Y,Z)$, and this variant has solution $(Y',Z')$, then we can see that $Y_t=Y'_t+t$, but that $Y'_t$ satisfies the requirements of Theorem \ref{thm:Ytimhom}, and therefore its solution is of the form $Y'_t=u(X_t)$ for some $u:\X\to\bR$, which may be convenient for calculation.

\subsection{Reliability for networks with control}

Consider a model for transmission of messages over a finite network. A message is to be transmitted from a node $x_0$ to a node $x_1$, and each node $e_i$ naturally passes messages to node $e_j$ at a rate $e_j^*Ae_i$. We wish to examine the probability that the message eventually reaches its destination. When the network is flawless this probability is always one. However, suppose that each node $x$ loses the message at a rate $r_x$, and that there is a (possibly empty) collection of nodes $\Xi$ at which a message is irretrievably lost. Without any control, we can consider this as a linear BSDE to a stopping time. 

Consider the Markov chain describing the motion of a message. Let $\tau$ be the first hitting time of $\{x_1\}\cup\Xi$, and let $Y_t$ be the solution to the BSDE
\[I_{\{X_\tau=x_1\}}= Y_t - \int_{]t,\tau]} -r_{X_{t-}} Y_{u-} du + \int_{]t,\tau]} Z_u^*dM_u\]
so that 
\[Y_t = E[e^{-\int_{]t,\tau]}r_{X_{s-}}ds} I_{\{X_\tau=x_1\}}|\F_t].\]

Now suppose we can extend this model, so that each node has some control over where a message is sent. Each node has a control $u$ with which it can modify transitions so that they occur at a rate $e_i^*A^ue_j$ for some $A^u\sim_\gamma A$, so as to maximise the probability that the message reaches its target. The maximal probability is then given by solving the BSDE
\[I_{\{X_\tau=x_1\}}= Y_t - \int_{]t,\tau]} -r_{X_{s-}} Y_{s-} +\sup_u\{Z_s^*(A^u-A)X_{s-}\}ds + \int_{]t,\tau]} Z_u^*dM_u,\]
and the optimal policy for the active node $X_{t-}$ at time $t$ is given by
\[u^* = \argmax_u \{Z_t^*(A^u-A)X_{t-}\}.\]

\subsection{Non-Ohmic Electronic circuits}
We now give a different situation, where we consider an electronic circuit. The theory of circuits of resistors is described using Kirchoff's laws and Ohm's law, a more detailed presentation of the approach we take here can be found in \cite[Chapter 1]{Grimmett2010}. 

Consider a circuit of resistors. Let the circuit be described by a graph represented in $\X$, and let the edge $(e_i, e_j)$ have resistance $R_{i,j}$. Then it follows from Ohm's law and Kirchoff's laws that, on a source set $\Xi$, the voltage potential is a prescribed function $\phi$, and off the source set the voltage potential $v$ in the circuit is a harmonic function.  In particular, if $w_{i,j}= 1/ R_{i,j}$ denotes the conductance over the edge $(e_i, e_j)$, we have
\[\begin{cases}
   v(e_i)=\phi(e_i), & e_i\in\Xi,\\
v(e_i)\sum_j w_{i,j} = \sum_j w_{i,j} v(e_j),& e_i\notin \Xi.
  \end{cases}\]

Let $A$ be the matrix defined by $e_j^*A e_i = w_{i,j}$ for $i\neq j$ and $e_i^*A e_i = -\sum_j w_{i,j}$. Then we consider a Markov chain $X$ with rate matrix $A$, and can show that
\[v(e_i) = E^A[\phi(X_\tau)|X_0=e_i]\]
where $\tau$ is the first hitting time of the source set.

Now suppose that our circuit no longer consists purely of resistors, but that it also has diodes. These ubiquitous electronic components fail to satisfy Ohm's law, and so the above representation in terms of a Markov chain fails. However, it is possible to write down a nonlinear relationship between voltage, current and resistance which is satisfied (see for example, \cite[Section 5.2]{Crecraft2002}). In particular, if we know the voltage drop over an edge  $V_{i,j}= v(e_i)-v(e_j)$ we can write $I_{i,j}R_{i,j}(V_{i,j}) = V_{i,j}$, where $I_{i,j}$ is the current passing over the edge. From \cite{Crecraft2002}, the relation for an \emph{np}-type diode, for example, is given by 
\[I_{i.j}=I^s(\exp(V_{i,j}/V^T)-1)\]
where $I^s, V^T$ are constants based on the properties of the diode. Rearranging this gives the implied resistance
\[R_{i,j}(V_{i,j}) = \frac{V_{i,j}}{I^s(\exp(V_{i,j}/V^T)-1)}> 0\]
which is a Lipschitz function of $V_{i,j}$, and is bounded away from zero (and $\infty$) over any finite interval.

Writing $\mathbf{v}$ for the vector with entries $e_i^*\mathbf{v} = v(e_i)$, and $w_{i,j}(v) = (R_{i,j}(V_{ij}))^{-1}$ for the implied conductance, we can construct a matrix $A^\mathbf{v}$ with $e_j^*A^\mathbf{v} e_i = w_{i,j}$ for $i\neq j$ and $e_i^*A^\mathbf{v} e_i = -\sum_j w_{i,j}$. For a given voltage potential vector $\mathbf{v}$, this $A^\mathbf{v}$ is the transition matrix based on the implied conductances at the potential $\mathbf{v}$. 

Let $A$ be as before, now defined by replacing each diode with a resistor with resistances $\tilde R_{i,j}$. As the circuit is finite, $A$ generates a uniformly ergodic Markov chain. 

 We then consider the BSDE
\[\phi(X_\tau) = Y_t - \int_{]t,\tau]} Z_s^*(A^{Z_s}-A)X_{s-} ds + \int_{]t,\tau]} Z_t dM_t\]
This is a time homogenous Markovian BSDE to a first hitting time. The terminal value $\phi(X_\tau)$ is bounded as for each $n>0$, we can find $\gamma_n>0$ such that $A^{\mathbf{v}} \sim_{\gamma_n} A$ whenever $\|v\|\leq n$, we see that the driver $f(X_{s-}, Z_s) = Z_s^*(A^{Z_s}-A)X_{s-}$ is locally $\gamma$-balanced, and $f(x,0)=0$. By Theorem \ref{thm:STBSDEloc}, this BSDE admits a unique bounded solution, and by Theorem \ref{thm:Ytimhom} it is of the form $Y_t = v(X_t)$, which is precisely the voltage potential on the circuit with a diode.

Using the same methodology, one can also extend this method to other electronic components, for example, transistors.

\section{Conclusions}
We have considered BSDEs on Markov chains when the terminal time is replaced by a non-bounded stopping time, and no strict montonicity property in $Y$ (i.e. a `discounting' term) is given for $f$. We have shown that, given appropriate polynomial growth bounds on the terminal value and the driver, and given sufficient integrability of the stopping time, these equations admit unique solutions with uniformly controlled growth in time. For the special case of first hitting times of a set, we have seen that the integrability conditions on the stopping time are indeed satisifed, and so these BSDEs admit unique solutions.

We have then considered various applications to these equations, in particular to control problems which terminate at a hitting time, and to problems of nonlinear behaviour on certain graphs, for example to network reliability. We have also seen how this gives a stochastic approach to our understanding of electronic circuits without Ohm's law.

We can also extend these approaches to more general types of BSDEs, either to the classic Brownian setting of \cite{Pardoux1990}, or to allow for Poisson jumps as in \cite{Royer2006}. For the Brownian case, the required ergodicity properties may be derivable from \cite{Debussche2011}, however such an extension is non-trivial. On the other hand, if $\tau$ is restricted to be the first exit time of a set, then standard estimates may be available to give the required integrability, in which case our results will extend directly.

\bibliographystyle{plain}
\bibliography{../../RiskPapers/General}

\begin{thebibliography}{10}

\bibitem{Bismut1973}
Jean-Michel Bismut.
\newblock Conjugate convex functions in optimal stochastic control.
\newblock {\em Journal of Mathematical Analysis and Applications}, 44:384--404,
  1973.

\bibitem{Briand1998}
Philippe Briand and Ying Hu.
\newblock Stability of {BSDE}s with random terminal time and homogenization of
  semilinear elliptic {PDE}s.
\newblock {\em Journal of Functional Analysis}, 155:455--494, 1998.

\bibitem{Cohen2011a}
Samuel~N. Cohen.
\newblock Representing filtration consistent nonlinear expectations as
  $g$-expectations in general probability spaces.
\newblock {\em Stochastic Processes and their Applications}, 122(4):1601--1626,
  2012.

\bibitem{Cohen2008}
Samuel~N. Cohen and Robert~J. Elliott.
\newblock Solutions of backward stochastic differential equations on {M}arkov
  chains.
\newblock {\em Communications on Stochastic Analysis}, 2(2):251--262, August
  2008.

\bibitem{Cohen2008b}
Samuel~N. Cohen and Robert~J. Elliott.
\newblock Comparisons for backward stochastic differential equations on
  {M}arkov chains and related no-arbitrage conditions.
\newblock {\em The Annals of Applied Probability}, 20(1):267--311, 2010.

\bibitem{Cohen2010}
Samuel~N. Cohen and Robert~J. Elliott.
\newblock Existence, uniqueness and comparisons for {BSDE}s in general spaces.
\newblock to appear in \emph{Annals of Applied Probability}, 2010.

\bibitem{Cohen2009}
Samuel~N. Cohen, Robert~J. Elliott, and Charles~E.M. Pearce.
\newblock A general comparison theorem for backward stochastic differential
  equations.
\newblock {\em Advances in Applied Probability}, 42(3):878--898, 2010.

\bibitem{Cohen2012}
Samuel~N. Cohen and Ying Hu.
\newblock Ergodic {BSDE}s driven by {M}arkov chains.
\newblock http://arxiv.org/abs/1207.5680.

\bibitem{Cohen2011b}
Samuel~N. Cohen and Lukasz Szpruch.
\newblock On {M}arkovian solutions to {M}arkov chain {BSDE}s.
\newblock {\em Numerical Algebra, Control and Optimization}, 2(2):257--269,
  2012.

\bibitem{Confortola2012}
Fulvia Confortola and Marco Fuhrman.
\newblock Backward stochastic differential equations and optimal control of
  marked point processes.
\newblock arXiv:1205.5140v1, 2012.

\bibitem{Crecraft2002}
David Crecraft and Stephen Gergely.
\newblock {\em Analog Electronics: Circuits, Systems and Signal Processing}.
\newblock Butterworth-Heinemann, 2002.

\bibitem{Darling1997}
R.W.R. Darling and Etienne Pardoux.
\newblock Backwards {SDE} with random terminal time and applications to
  semilinear elliptic {PDE}.
\newblock {\em The Annals of Probability}, 25:1135--1159, 1997.

\bibitem{Debussche2011}
Arnaud Debussche, Ying Hu, and Gianmario Tessitore.
\newblock Ergodic {BSDE}s under weak dissipative assumptions.
\newblock {\em Stochastic Processes and their Applications}, 121:407--426,
  2011.

\bibitem{Elliott1994a}
Robert~J. Elliott, L.~Aggoun, and J.B. Moore.
\newblock {\em Hidden Markov Models: Estimation and Control}.
\newblock Springer-Verlag, Berlin-Heidelberg-New York, 1994.

\bibitem{Follmer2002}
Hans F\"ollmer and Alexander Schied.
\newblock {\em Stochastic Finance: An introduction in discrete time}.
\newblock Studies in Mathematics 27. de Gruyter, Berlin-New York, 2002.

\bibitem{Grimmett2010}
Geoffrey Grimmett.
\newblock {\em Probability on graphs}.
\newblock Cambridge University Press, 2010.

\bibitem{McShane1967}
E.~J. McShane and R.~B. Warfield, Jr.
\newblock On {F}ilippov's implicit functions lemma.
\newblock {\em Proceedings of the American Mathematical Society}, 18(1):41--47,
  1967.

\bibitem{Meyn2009}
Sean Meyn and Richard~L Tweedie.
\newblock {\em Markov Chains and Stochastic Stability}.
\newblock Cambridge University Press, 2nd edition, 2009.

\bibitem{Pardoux1990}
E.~Pardoux and Shige Peng.
\newblock Adapted solution of a backward stochastic differential equation.
\newblock {\em Systems \& Control Letters}, 14:55--61, 1990.

\bibitem{Rogers2000}
L.C.G. Rogers and D.~Williams.
\newblock {\em Diffusions, Markov Processes and Martingales}.
\newblock Cambridge University Press, 2nd edition, 2000.

\bibitem{Royer2004}
Manuela Royer.
\newblock {BSDE}s with a random terminal time driven by a monotone generator
  and their links with {PDE}s.
\newblock {\em Stochastics and Stochastics Reports}, 76:281--307, 2004.

\bibitem{Royer2006}
Manuela Royer.
\newblock Backward stochastic differential equations with jumps and related
  non-linear expectations.
\newblock {\em Stochastic Processes and their Applications},
  116(10):1358--1376, October 2006.

\end{thebibliography}
\end{document}